\documentclass{article}
\usepackage[english]{babel}
\usepackage{color}
\usepackage{amsmath,amssymb,amsthm}
\usepackage{algorithmic}
\usepackage{algorithm}
\usepackage{graphicx}
\usepackage{epsfig}
\usepackage{subfigure}
\usepackage{multirow}
\usepackage{url}
\usepackage{color, palatino}
\usepackage[top=3cm, bottom=3cm, left=2cm, right=2cm]{geometry}

\newtheorem{definition}{Definition}
\newtheorem{lemma}{Lemma}

\newtheorem{theorem}{Theorem}
\theoremstyle{definition}

\newtheorem{example}{Example}

\newcommand{\new}{\newcommand}
\new{\set}[1]{\{{#1}\}}

\new{\rone}{{\mathbb R}}
\new{\nat}{{\mathbb N}}
\newcommand{\argmin}{\operatornamewithlimits{argmin}}
\newcommand{\argmax}{\operatornamewithlimits{argmax}}
\new{\Prob}[1]{\mathrm{Pr}\left(\, #1 \right)}
\new{\E}{{\mathrm E}}
\new{\eps}{\varepsilon}
\new{\nor}[1]{\left\|{#1}\right\|}
\new{\norr}[1]{\left\|{#1}\right\|_\rho}
\new{\scal}[2]{\langle{#1},{#2}\rangle}
\new{\eqn}[1]{~(\ref{#1})}
\new{\norh}[1]{\left\|{#1}\right\|_\hh}
\new{\noru}[1]{|\!|\!|{#1}|\!|\!|}
\new{\scalh}[2]{\langle{#1},{#2}\rangle_{\hh}}
\new{\scaldue}[2]{\left\langle{#1},{#2}\right\rangle_{\rho}}
\new{\bg}{\begin}

\new{\beeq}[2]{\begin{equation}\label{#1}{#2}\end{equation}}
\new{\room}{\ \ \ \ }

\new{\marg}{\rho_X}
\new{\prob}{\rho}
\new{\dd}{d}
\new{\db}{b}
\new{\tolext}{\nu}
\new{\ovlp}{\alpha}
\new{\ts}{\vz}
\new{\n}{n}
\new{\vz}{{\mathbf z}}
\new{\vx}{{\mathbf x}}
\new{\vy}{{\mathbf y}}
\new{\vs}{{\mathbf s}}
\new{\vv}{{\mathbf v}}
\new{\kk}{K_p^{\mathcal{G}}}
\new{\kknoG}{K_p}
\new{\esp}[1]{{\mathcal E}({#1})}
\new{\emp}[1]{{\mathcal E_\vz}({#1})}
\new{\hh}{{\mathcal H}}
\new{\ldue}{L^2(X,\marg)}
\new{\lp}{L^p(X,\marg)}
\new{\la}{\lambda}
\new{\pone}{\tau}
\new{\ptwo}{\mu}
\new{\tah}{f_\hh}
\new{\regr}{f_\rho}
\new{\fz}{f_\ts}
\new{\fzl}{f_{\ts}^\la}
\new{\phiv}{{\mathcal E}_{\pone}}
\new{\F}{F}
\new{\ff}{f^*}
\new{\acca}{h}
\new{\gr}{G}
\new{\gtot}{\mathcal{G}}
\new{\ngr}{B}
\new{\JG}{\Omega^{\gtot}_p}
\new{\JGtext}{\JG}
\new{\JGtwo}{\Omega^{\gtot}_2}
\new{\proj}[1]{\pi_{#1\kk}}
\new{\projr}[2]{P_{\gtot_{#1},{#2}}}
\new{\ancora}{\lambda}
\new{\som}[2]{\sum_{#1=1}^{#2}}
\new{\K}{{\mathrm K}}
\new{\Z}{{\mathrm Z}}
\new{\LL}{{\mathrm L}}
\new{\St}[1]{\mathbf{S}_{#1}}
\new{\st}[1]{S_{#1}}
\new{\J}{J}
\new{\tv}{\mathcal J}
\new{\FF}{\mathcal F}
\new{\dv}{\textrm{div}}
\new{\w}{x}
\new{\vw}{{\mathbf \w}}
\new{\one}{{\mathbf 1}}
\new{\vf}{\textrm{f}}
\new{\vg}{\textrm{g}}
\new{\dom}{X}
\new{\al}{\alpha}
\new{\be}{\beta}
\new{\id}{I}
\new{\TT}{\mathcal T_\sigma}
\new{\yy}{y}
\new{\B}{B_\sigma}
\new{\prox}{\textrm{prox}}

\new{\jj}{j}
\new{\jjj}{j'}
\new{\jjjj}{\gamma}
\new{\ii}{i}
\new{\iii}{i'}
\new{\iiii}{k}
\new{\itext}{m}
\new{\itint}{l}
\new{\xii}{x_\ii}
\new{\yi}{y_\ii}
\new{\dej}{\partial_\jj}
\new{\step}{\sigma}
\new{\GG}{{\mathcal G}}
\new{\Gk}{{\mathcal G}_k}
\new{\PP}{P}
\new{\kernel}{k}

\new{\note}[1]{$\clubsuit$ \textbf{#1} $\clubsuit$\\}

\hyphenation{one---ho-mo-ge-neous}
\new{\blu}[1]{\textcolor{blue}{#1}}

\begin{document}
\title{Proximal methods for the latent group lasso penalty}

\author{\normalsize{Silvia Villa$^\star$,   Lorenzo Rosasco$^\dagger$, Sofia Mosci$^\ddagger$, Alessandro Verri$^\ddagger$ }\\
\small \em $\star$  Istituto Italiano di Tecnologia, Genova, ITALY\\
\small \em $\dagger$ CBCL, McGovern Institute, Artificial Intelligence Lab, BCS, MIT, USA\\
\small \em $\ddagger$ DIBRIS, Universit\`{a} di Genova, ITALY\\
{\small \tt  silvia.villa@iit.it, lrosasco@mit.edu, \{sofia.mosci,alessandro.verri\}@unige.it, }}

\maketitle

\begin{abstract}
We  consider a regularized least squares problem, with regularization by structured sparsity-inducing norms, which extend the usual $\ell_1$  and the group lasso penalty, by allowing the subsets to overlap. Such regularizations lead to nonsmooth problems that are difficult to optimize, and we propose in this paper a suitable  version of an accelerated proximal method to solve them. 
We prove convergence of a nested procedure, obtained composing an accelerated  proximal method with an inner algorithm for computing the proximity operator.
By exploiting the geometrical properties of the penalty, we devise a new active set strategy, 
thanks to which the inner iteration is relatively fast, thus guaranteeing good computational performances of the overall algorithm.  
Our approach allows to deal with high dimensional problems without pre-processing for
 dimensionality reduction, leading to better computational and prediction performances with respect to the state-of-the art methods, as shown empirically 
 both on toy and real data.
\end{abstract}

{\bf keywords:}  Structured sparsity, proximal methods, regularization \\

{\bf AMS Classification:}  65K10, 90C25 \\

\section{Introduction }

Sparsity has become a popular way to deal with a number of problems arising in signal and
image processing, statistics and machine learning \cite{Linz10}. In a broad sense, it refers to the possibility of  writing the solution 
in terms of a few building blocks. Often sparsity based methods are the key towards finding interpretable 
models in  real-world problems.
For example, sparse regularization based with $\ell_1$-type penalties is a powerful approach
to find sparse solutions by  minimizing a convex functional \cite{Tibshirani96, chen1999, efron04}.
The success of  $\ell_1$ regularization motivated exploring different kinds of sparsity properties 
for regularized optimization problems, exploiting available a priori  information, which restricts the admissible sparsity patterns of the solution.
An example of  a sparsity pattern is when  the variables are partitioned into groups (known a priori), 
and the goal is to estimate a sparse model where variables belonging to the same group are either jointly selected
or discarded. This problem can be solved by regularizing with the group $\ell_1$ penalty, also known as group lasso penalty \cite{yuan06}.
The latter is the sum, over the groups, of the euclidean norms of the coefficients restricted to each group. Note that, for any $p>1$, the same groupwise selection can be achieved by regularizing with the $\ell_1$/$\ell_p$ norm, i.e. the sum over the groups of the $\ell_p$ norm of the coefficients restricted to each group.
A possible generalization of the group lasso penalty is obtained considering groups of variables which can be potentially overlapping \cite{zhao08,jenatton2009}, and the goal is to estimate a model which support is the union of groups. 
For example, this is a common situation in bioinformatics (especially in the context of high-throughput data such as gene expression  and mass spectrometry data),
 where problems are characterized by a very low number of samples with several thousands of variables. 
In fact, when the number of samples is not sufficient to guarantee accurate model estimation, a possible solution is to take advantage of
the huge amount of prior knowledge encoded in online databases such as the Gene Ontology \cite{GO2000}. 
Largely  motivated by  applications in bioinformatics, the {\itshape latent group lasso with overlap penalty} is proposed in \cite{jacob2009} and further studied in \cite{obozinski:inria-00628498,bach12} and in \cite{PeyFad11} in the image processing context, which generalizes the $\ell_1$/$\ell_2$ penalty to overlapping groups, thus satisfying the assumption that the admissible sparsity patterns must be unions of a subset of the groups.

All the methods proposed in the literature solve the minimization problem arising in \cite{jacob2009} by applying state-of-the-art techniques for group lasso in an expanded space, called {\em space of latent variables}, built by duplicating variables that belong to more than one group. The most popular optimization strategies that have been proposed are interior-points methods \cite{bach04,park07}, block coordinate descent \cite{meier08}, proximal methods \cite{rosasco09struspa, rosasco2010ecml, PeyFad11, LiuHe10, CheLinKim12} and the related  alternating direction method \cite{DenYinZha11}. Very recently, the paper  \cite{QinGol12} proposed an accelerated alternating direction method and \cite{QinSchGol12} studied a block coordinate descent, along with a proximal method with variable step-sizes.

As already noted in \cite{jacob2009}, though very  natural, every implementation developed in the latent variables does not scale to large datasets: 
when the groups have significant overlap, a more scalable algorithm with no data duplication is needed. 
For this reason we propose an alternative optimization approach to solve the group lasso problem with overlap, and extend it to the
entire family of group lasso with overlap penalties,  that generalize the 
$\ell_1$/$\ell_p$ penalties to overlapping groups for $p>1$.
Our method is a two-loops iterative scheme based on proximal methods (see for example \cite{nesterov83, becker09, beck09}), and more precisely on the accelerated version named FISTA \cite{beck09}. 
It does not require explicit replication of the variables and is thus more appropriate to deal with high dimensional problems with large group overlap.
In fact, the proximity operator can be efficiently computed by exploiting the geometrical properties of the penalty. We show that such an operator can be written as the identity minus the projection onto a suitable convex set, which is the intersection of as many convex sets as the number of {\itshape active groups}, that is groups corresponding to active constraints, which can be easily found. Indeed, the identification of the active groups is a key step, since it allows  computing the projection in a reduced space.
For general $p$, the projection can  be solved via the Cyclic Projections algorithm \cite{bauschke94ancora}.
Furthermore, for the case $p=2$, we present an accelerated scheme, where the reduced projection is computed by solving a corresponding  dual problem via the projected Newton method \cite{bertsekas82}, thus working in a much lower dimensional space.

The present paper completes and extends the preliminary results presented in the short conference version \cite{glopridu}. 
In particular, it contains a general mathematical presentation and all the proofs, which were omitted in \cite{glopridu}. We next describe how the rest of the paper is organized, and then highlight the main novelties with respect to the short version. 
In Section \ref{sec:problem}, we cast the problem of Group-wise Selection with Overlap (GSO)
as a regularization problem based on a  modified $\ell_1$/$\ell_p$-type penalty and compare it with other structured sparsity penalties.
We extend the approach in \cite{glopridu} for $p=2$ to general $p>1$.
In Section \ref{sec:algo}, we describe the derivation of the proposed optimization scheme, 
and prove its convergence.
Precisely, we first recall proximal methods in Subsection \ref{sec:ISTA}, 
then in Subsection \ref{sec:prox} we describe the technical  results that ease the computation 
of the proximity operator as a simplified projection, 
and present different projection algorithms depending on $p$. 
With respect to \cite{glopridu}, we show that our active set strategy can be profitably used 
in this generalized framework in combination with any algorithm chosen to compute the inner 
projection. Furthermore, to solve the projection for a general $p\in(1,+\infty]$, we discuss the use of a cyclic 
projections algorithm, whose convergence in norm is guaranteed and results in a rate of convergence 
for the proposed proximal method, proved in Subsection \ref{sec:convergence}.
Section \ref{sec:expe} is a substantial extension of the experiments performed in \cite{glopridu}. 
We empirically analyze the computational performance of our optimization procedure.
We first study the performance of the different variations of the proposed optimization scheme.
Then we present a set of numerical experiments comparing  running time of our algorithm with  state-of-the-art techniques. 
We conclude with a real data experiment where we show that the improved computational performance
allows dealing with large data sets without preprocessing thus improving also
the prediction and selection performance. Finally, in Appendix B 
we review the projected Newton method \cite{bertsekas82}.\\

\noindent{\bf Notation}. Given a vector $ x\in\rone^\dd$, 
we denote with  $\nor{\cdot}_p$  the $\ell_p$-norm of $ x$,  defined as $\nor{ x}_p = (\sum_{j=1}^\dd  x_j^p)^{1/p}$
and  $\nor{\w}_{\infty} = \max_{j\in\{1,\dots\dd\}} |\w_j|$.
We will also use the notation $\nor{\w}_{\gr,p} = (\sum_{j \in \gr}\w_j^p)^{1/p}$ for $p\geq 1$, and
 $\nor{\w}_{\gr,\infty} = \max_{j\in\gr} |\w_j|$
to denote the $\ell_p$-norm of the components of $\w$ in $\gr\subset\{1,\dots,\dd\}$. 
When the subscript $p$ is omitted, the $\ell_2$ norm is used, $\nor{\cdot}=\nor{\cdot}_2$. 
The conjugate exponent of $p$ is denoted by $q$; we recall that $q$ is such that $1/p+1/q=1$.
In the following, $X$ will denote $\rone^\dd$ and $Y$ a bounded interval in $\rone$.

\section{Group-wise selection with Overlap (GSO)}\label{sec:problem}
This paper proposes an optimization algorithm for a regularized least-squares problem of the type
\begin{equation*}\label{GSO}\tag{GSO-$p$}
\min_{ x \in\rone^\dd}\   \phiv^p(\w), \qquad \phiv^p(\w) =\frac{1}{n} \nor{\Psi x-y}^2+2\pone\JG(\w)\, , 
\end{equation*}
where $\Psi:\rone^\dd\to\rone^\n$ is a linear operator, $y\in\rone^n$, and  $\JG:\rone^\dd\to[0,+\infty)$ is a convex and lower semicontinuous penalty, depending on a parameter $p\in (1,+\infty]$, and on an a priori given group structure, $\mathcal{G} = \{\gr_r\}_{r=1}^\ngr$,  with $\gr_r\subset\{1,\dots,\dd\}$ and $\bigcup_{r=1}^\ngr \gr_r = \{1,\dots,\dd\}$. Note that other data fit terms could be used, different from the quadratic one, as long as they are convex and continuously differentiable with Lipschitz continuous gradient. We will focus on least squares to simplify the exposition. 
Most group sparsity penalties can be built starting from the family of canonical linear projections on the subspace identified by the indices belonging to $G_r$, i.e. $P_r:\rone^\dd\to \rone^{G_r}$. 
The definition of the penalties we consider is based on the adjoint of the linear operator
\[
P:\rone^\dd \to\prod_{r=1}^\ngr \rone^{G_r}, \quad P\w=(P_1\w,\ldots,P_B\w) ,
\]
that is the operator
\[
P^*:\prod_{r=1}^\ngr \rone^{G_r}\to \rone^\dd, \quad P^*(v_1,\dots,v_B)=\sum_{r=1}^\ngr P_r^*v_r,
\] 
where $P_r^*:  \rone^{G_r}\to \rone^\dd$ is the canonical injection.  For $\w\in\rone^\dd$ we set 
\beeq{penalty}{ \JG(\w) = \min_{\substack{v\in\prod \rone^{G_r}\\ P^*v=\w}} \sum_{r=1}^\ngr \nor{v_r}_p.} 
For $p=2$, the functional $\JGtwo$ was introduced in \cite{jacob2009} (see also \cite{obozinski:inria-00628498,bach12}). The distinctive feature of the family of penalties $\JG$, is that they have the  property of inducing group-wise selection, that is they lead to solutions with support (i.e. set of non zero entries) which is the {\itshape union} of a subsets of the groups defined a priori. In fact, $\JG$ can be seen as a generalization of the mixed $\ell_1/\ell_p$ norms, originally introduced for disjoint groups:
\[
\mathrm{R}^{\mathcal{G}}_p(\w)=\sum_{r=1}^\ngr  \nor{\w}_{\gr_r,p}\,, \qquad p\geq 1.
\] 
For $p=2$, $\mathrm{R}^{\mathcal{G}}_p$ is the group lasso penalty, and it is well-known \cite{yuan06} that such penalties lead to solutions whose support  is the union of a small number of groups. The penalty $\mathrm{R}^{\mathcal{G}}_p$ can be written  also if the groups overlap, and more generally the composite absolute penalties (CAP)
$$
J^{\mathcal{G}}_{\gamma,p}(\w)=\sum_{r=1}^\ngr  (\nor{\w}_{\gr_r,p})^\gamma\,,
$$
first introduced in \cite{zhao08} and coinciding with  $\mathrm{R}^{\mathcal{G}}_p$ for $\gamma=1$, have been intensively studied. The $J_{\gamma,p}$ penalties allow to deal with  complex groups structures involving  hierarchies or graphs  and it is proved in  \cite{jenatton2009} that the CAP penalties constraint the support to be the {\em complement of a union} of groups. $\JG$ and $\mathrm{R}^{\mathcal{G}}_p$ are thus somehow complementary and have different domain of applications \cite{jenatton2009,mairal2010network,jenatton2010SPCA}. 

While many algorithms have been proposed to solve the optimization problem corresponding to $\mathrm{R}^{\mathcal{G}}_p$, the one corresponding
$\JG$ is much less studied. This is due on the one hand to the fact that the penalty is more complex, and on the other hand to the widespread use of the ``replication strategy''. The latter is based on the observation that, using the definition of $\JG$, and the surjectivity of $P^*$, the (GSO-$p$) minimization problem   can be written as
\beeq{eq:repl}{
\min_{v\in \prod_{r=1}^\ngr \rone^{G_r}} \frac{1}{n} \nor{\Psi P^*v-y}^2+2\pone  \sum_{r=1}^\ngr \nor{v_r}_p,}
which is a group lasso problem without overlap for the linear operator $\Psi P^*$  in the so called {\em latent variables} $(v_r)_{r=1}^B$, obtained by replicating variables belonging to more than one group. The last rewriting allows to apply every algorithm developed for the standard group-lasso to the overlapping case, but this strategy is not feasible for high dimensional problems with large group overlaps, as potentially many artificial dimensions are created. The main goal of this paper is to propose and study an optimization algorithm which does not require the replication of variables belonging to more than one group.

The choice $p>1$ has both technical and practical motivations.
On the one hand, it guarantees convexity of the penalty -- which can be shown to be a norm (see Lemma 1 in  \cite{jacob2009} for $p=2$) --, and, as a consequence, of the \eqref{GSO} regularization problem  (note that this is valid for $p=1$ too).
On the other hand, it enforces  {\itshape ``democracy''} among the elements that belong to the same group, in the sense that no intragroup sparsity is enforced, thus inducing group-wise selection.
The case $p=1$ is trivial, since the penalty $\Omega^{\mathcal{G}}_1$  coincides with the $\ell_1$ norm, or lasso penalty \cite{Tibshirani96}:
$$
\Omega^{\mathcal{G}}_1(\w)=\!\!\!
\inf_{\substack{(v_1,\dots,v_\ngr)\in \prod \rone^{G_r} \\ P^*v= x}}
\sum_{r=1}^\ngr\sum_{j\in\gr_r}\!\!  |(v_r)_j|
 =\!\! \!
\inf_{\substack{(v_1,\dots,v_\ngr)\in \prod \rone^{G_r} \\ P^*v= x}}
\sum_{j=1}^\dd \sum_{r: j\in\gr_r}^\ngr\!\!\!   |(v_r)_j|
=\sum_{j=1}^\dd|\w_j|,
$$ 
and is thus independent of $\mathcal{G}$.

\begin{example}  A particular instance of the above problem occurs in statistical learning. Assume that the estimator and the regression function can be described by a generalized linear model $f(x)=\sum_{j=1}^\dd  x_j\psi_j(x)$, for a given dictionary $\{\psi_j\}_{j=1}^\dd$ of functions $\psi_j: X\to Y$ (with $X$ a set and $Y\subseteq \rone$). Given a training set $\{(x_i,y_i)_{i=1}^\n\} \in (X\times Y)^\n$ the regularized empirical risk takes the form
\[
\frac{1}{n} \nor{\Psi x-y}^2+2\pone\JG(\w),
\]
with $\Psi:\rone^\dd\to\rone^\n$,   $\Psi\w=\sum_{j=1}^\dd \psi_j(x_i) x_j$ and $y=(y_1,\ldots,y_n)$. 
\end{example}

\begin{example} Most results obtained in the paper hold in an infinite dimensional setting. In particular, our approach can be naturally extended to the {\em multiple kernel learning}(MKL) problem \cite{michelli05}.  For this problem, given reproducing kernel Hilbert spaces $\hh_1,\ldots,\hh_m$ of functions $g:\mathcal{X}\to \mathbb{R}$, defining $\hh=\sum_{r=1}^m \hh_r$, the resulting optimization problem takes the form (see \cite{michelli05}) 
\[
\min_{\substack{g\in \prod_r\hh_{r}} } \nor{\Psi (\sum_r g_r)-y}^2+ \sum_{r=1}^m \|g_r\|_{\hh_r},
\]
for a suitable $\Psi:\hh \to \mathbb{R}^n$, $y\in\mathbb{R}^n$. As can be readily seen, the multiple kernel learning problem has the same structure of the \eqref{GSO} problem described above. 
\end{example}


\section{An efficient proximal algorithm}\label{sec:algo}

Due to non-smoothness of the penalty term, solving the \eqref{GSO} minimization problem  is not trivial.
Moreover, if one needs to solve the \eqref{GSO} problem  for high dimensional data, the use of standard second-order methods such as interior-point methods is precluded  (see for instance \cite{becker09}), since they need to solve large systems of linear equations to compute the 
Newton steps.  On the other hand,  first order methods inspired to Nesterov's seminal paper \cite{nesterov05} (see also \cite{nesterov83})
and based on the proximal map are accurate, and robust, in the sense that their performance does not depend on the fine tuning of various controlling parameters. 
Furthermore, these methods  were already  proved to be a computationally  efficient alternative for solving many regularized inverse problems in image processing \cite{ChaPoc11}, compressed sensing \cite{becker09} and machine  learning applications \cite{bach12,duchi2009,rosasco2010ecml}.  

\subsection{Proximal methods}\label{sec:ISTA}

The (GSO-$p$) regularized convex functional is the sum of a convex smooth term, $F(\w)=\frac{1}{n} \nor{\Psi\w-y}^2$, with Lipschitz continuous gradient,  and a non-differentiable penalty $\pone\JG(\cdot)$. A minimizing sequence can be computed with a proximal gradient algorithm \cite{Tse11} (a.k.a. forward-backward splitting method \cite{combettes}, and Iterative Shrinkage Thresholding Algorithm (ISTA) \cite{beck09})
\begin{align*}\label{ISTA}\tag{ISTA}
\w^\itext = \textrm{prox}_{\frac{\pone}{\sigma}\JG}\left(\w^{\itext-1}-\frac{1}{2\sigma}
\nabla F(\w^{\itext-1})\right)
\end{align*}
for a suitable choice of $\sigma$, and  any initialization $\w^0$.  Recently, several accelerations of ISTA have been proposed \cite{Nes07,Tse11,beck09}. With respect to ISTA, they only require the additional computation of a linear combination of two consecutive iterates. Among them, FISTA (Fast Iterative Shrinkage Thresholding Algorithm) \cite{beck09} is given by the following updating rule for $\itext\geq 1$
\begin{align}\label{FISTA}
\nonumber \w^\itext &= \textrm{prox}_{\frac{\pone}{\sigma}\JG}\left(\acca^{\itext}-\frac{1}{2\sigma}
\nabla F(\acca^{\itext})\right)\\
\tag{FISTA}
s_{\itext+1} &= \frac{1}{2}\left(1+\sqrt{1+4s_\itext^2}\right)\\
\nonumber\acca^{\itext+1} &= \left(1+\frac{s_{\itext}-1}{s_{\itext+1}} \right)  \w^{\itext}+ \frac{1-s_{\itext}}{s_{\itext+1 }}\w^{\itext-1}
\end{align}
for a suitable choice of $\sigma>0$, $s_1 = 1$, and  any initialization $\acca^1=\w^0$. 
Both schemes are based on the computation of the proximity operator \cite{Mor62}, which is defined as  
\begin{equation}\label{eq:def_prox}
\textrm{prox}_{\lambda \JG} (z)=\argmin_{x\in \rone^\dd} \Phi_\lambda(x),\qquad\text{with}\quad \Phi_\lambda(x)=\frac1{2\lambda}\nor{x-z}^2+\JG(x) ,\qquad \lambda>0.
\end{equation}

\begin{algorithm}[!h]
\caption{FISTA for GSO-$p$}
\label{algo:GSO}
\begin{algorithmic}
\STATE \textbf{Given:} $\mathcal{G}, ~p\in(1,+\infty],~\tau>0, ~ \epsilon_0>0,\alpha>0,~\w^0=\acca^0 \in\rone^\dd, s_0=1, $ \\
\STATE  \textbf{Let:}  $\sigma = ||\Psi^T\Psi||/n, \itext = 0$ and $q$ such that $\frac 1 p +\frac 1 q =1$.
\WHILE{\texttt{convergence not reached}}
\STATE \vspace{0.1cm}
\begin{itemize}
\item  $\hat{\acca}^\itext = \acca^{\itext}-\frac{1}{\n\sigma} \Psi^T(\Psi \acca^{\itext}-y)$\\
\item Find $\hat{\mathcal{G}}^\itext = \{\gr\in\mathcal{G},\nor{\hat{\acca}^\itext}_\gr\geq\frac{\tau}{\sigma}\}$\\
\item Approximately compute the projection of $\hat{\acca}^\itext$ onto 
$\frac\tau\sigma K_p^{\hat{\mathcal{G}}^m}:=\bigcap_{\gr\in\hat{\mathcal{G}}^\itext}  \left\{\acca\in\rone^\dd\,:\, \nor{\acca}_{\gr,q}\leq\frac{\pone}{\sigma}\right\}$
with tolerance $\epsilon_0\itext^{-\alpha}$  \\
\item $\w^\itext = \hat{\acca}^\itext-\pi_{\frac{\tau}{\sigma}K_p^{\hat{\mathcal{G}}^\itext}}(\hat{\acca}^\itext)$
\item $s_{\itext+1} = \frac{1}{2}\left(1+\sqrt{1+4s_\itext^2}\right)$ \\
\item $\acca^{\itext+1}  =\left(1+\frac{s_{\itext}-1}{s_{\itext+1}} \right)\w^{\itext} +\frac{1-s_{\itext}}{s_{\itext+1 }} \w^{\itext-1}$\\
\end{itemize}
\ENDWHILE
\RETURN $\w^\itext$
\end{algorithmic}
\vspace{.3cm}
\end{algorithm}

The  convergence rate of $\phiv^p( x^\itext) -\min \phiv^p$,  for ISTA  and FISTA, is $O(1/\itext)$ and $O(1/\itext^2)$, respectively, 
when the proximity operator is computed exactly. However, in general, the exact expression is not available. 
Recently, it has been shown that, also in the presence
of  errors, the accelerated version maintains advantages with respect to the basic one.  
In fact, the rate $O(1/\itext^2)$ for FISTA in the presence of computational errors was recently proved in \cite{Bach11, salzo2011prox}
for various error criteria. Convergence of  ISTA with errors was already known, and first 
proved in  \cite{Roc76, combettes}.

Since the proximity operator of the penalty $\JG$ is not admissible in closed form, 
 the \eqref{GSO} minimization problem can thus be solved via an inexact version 
 of the iterative schemes \ref{ISTA} or \ref{FISTA},  where $\nabla F(\acca^\itext)$ is simply $2\Psi^T(\Psi \acca^\itext - y)/n$. 
Note that, in the special case of not overlapping groups, the proximity operator can be explicitly evaluated group-wise, and reduces to a group-wise soft-thresholding operator. In the general case, as explained in Subsection \ref{sec:prox}, the proximity operator can be written in terms of a projection, and  we will provide an algorithm to approximately compute it. Note also that we will show that at each step the projection involves only a subset of the initial groups, the active groups, thus significantly increasing the computational performance of the overall algorithm.\\

\subsection{Computing the proximity operator of $\JG$}\label{sec:prox}
In this subsection we state the lemmas that allow us to efficiently compute the proximity operator of $\JG$ and to formulate the inexact version of FISTA reported in Algorithm \ref{algo:GSO}.

As a direct consequence of standard results of convex analysis, Lemma \ref{lemma:prox} shows that the computation of the proximity operator amounts to the computation of a projection operator onto the intersection of convex sets, each of them corresponding to a group. 
In Lemma \ref{lemma:proj}, we theoretically justifies an active set strategy, by showing that when projecting a vector onto this intersection, it is possible to discard the constraints which are already satisfied.

\begin{lemma}
\label{lemma:prox}
For any $\lambda>0$ and $p\geq 1$, the proximity operator of $\lambda\JG$, where $\JG$ is defined in \eqref{penalty}, is given by
$$
 \textrm{prox}_{\lambda\JG} =\id-\pi_{\lambda\kk}.
 $$ 
where $\pi_{\lambda\kk}$ denotes the projection onto $\lambda\kk$, and $\kk$ is given by
\beeq{Kdef}{
\kk= \{x\in\rone^\dd, \nor{x}_{\gr_r,q}\leq 1,\  \textrm{for~} r=1,\dots,\ngr \}.
}
\end{lemma}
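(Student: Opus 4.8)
The natural route is through Fenchel–Moreau duality for the proximity operator. The key identity is the Moreau decomposition: for any proper, convex, lower semicontinuous function $\varphi$ and its convex conjugate $\varphi^*$, one has $z = \textrm{prox}_{\lambda\varphi}(z) + \lambda\,\textrm{prox}_{\varphi^*/\lambda}(z/\lambda)$, or equivalently $\textrm{prox}_{\lambda\varphi}(z) = z - \pi$-type term when $\varphi$ is a norm. So the first step is to identify the conjugate of $\JG$. Since $\JG$ is a norm (as recalled in the excerpt, following Lemma 1 of \cite{jacob2009} for $p=2$ and extended to general $p>1$), its conjugate is the indicator function of the dual-norm unit ball: $(\JG)^*(u) = \iota_{B^\circ}(u)$, where $B^\circ = \{u : \scal{u}{x}\le 1 \text{ for all } x \text{ with } \JG(x)\le 1\}$. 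Hence $\textrm{prox}_{\lambda\JG} = \id - \lambda\,\textrm{prox}_{\iota_{B^\circ}/\lambda}(\cdot/\lambda) = \id - \pi_{\lambda B^\circ}$, using that the proximity operator of an indicator function is the metric projection and that $\lambda\,\pi_{B^\circ}(\cdot/\lambda) = \pi_{\lambda B^\circ}(\cdot)$ by positive homogeneity of the projection onto a cone... more precisely onto a scaled set. So the whole statement reduces to showing $B^\circ = \kk$, i.e. that the unit ball of the dual norm of $\JG$ is exactly $\{x : \nor{x}_{\gr_r,q}\le 1 \text{ for all } r\}$.

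The second step, computing the dual norm, is where the real content lies. Starting from the variational (infimal) definition $\JG(\w) = \min_{P^*v = \w}\sum_r \nor{v_r}_p$, I would compute the dual norm $(\JG)^\circ(u) = \sup_{\JG(\w)\le 1}\scal{u}{\w}$. Substituting $\w = P^*v = \sum_r P_r^* v_r$ gives $\scal{u}{\w} = \sum_r \scal{P_r u}{v_r}$, and the constraint $\JG(\w)\le 1$ becomes (after taking the infimum over representations, which interacts correctly with the sup) $\sum_r\nor{v_r}_p\le 1$. So $(\JG)^\circ(u) = \sup\{\sum_r \scal{P_r u}{v_r} : \sum_r\nor{v_r}_p \le 1\}$. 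By the standard characterization of the $\ell_p$/$\ell_q$ duality pairing, for each block the supremum of $\scal{P_r u}{v_r}$ over $\nor{v_r}_p \le t_r$ is $t_r\nor{P_r u}_q = t_r\nor{u}_{\gr_r,q}$, and optimizing over the simplex $\sum_r t_r \le 1$, $t_r\ge 0$ gives $\max_r \nor{u}_{\gr_r,q}$. Therefore $(\JG)^\circ(u) = \max_r \nor{u}_{\gr_r,q}$, whose unit ball is precisely $\kk$ as in \eqref{Kdef}. This identifies $B^\circ = \kk$ and closes the argument.

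The last step is bookkeeping: assemble Moreau's decomposition with the correct scaling. Because $\JG$ is a norm (positively homogeneous of degree $1$), $\textrm{prox}_{\lambda\JG}(z) = z - \pi_{\lambda\kk}(z)$ holds for every $\lambda>0$; one verifies the scaling by noting $(\lambda\JG)^*(u) = \iota_{\lambda\kk}(u)$ (conjugate of $\lambda\varphi$ is $\varphi^*$ evaluated... rescaled appropriately, so that the conjugate of $\lambda\JG$ is the indicator of $\lambda$ times the dual ball) and $\textrm{prox}_{\iota_{\lambda\kk}} = \pi_{\lambda\kk}$.

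The main obstacle is the dual-norm computation under the infimal-representation structure — specifically, justifying the interchange of the $\sup_u$ with the $\inf_v$ in the definition of $\JG$, i.e. that $\sup_{\JG(\w)\le1}\scal{u}{\w}$ equals the supremum over all pairs $(u,v)$ with $\sum_r\nor{v_r}_p\le1$ and $\w = P^*v$. This is where one must be careful that the minimum in \eqref{penalty} is attained (it is, by coercivity on the relevant quotient and lower semicontinuity, which also underlies why $\JG$ is well-defined and a genuine norm) and that no duality gap sneaks in; once this is handled, the per-block $\ell_p$–$\ell_q$ duality and the simplex optimization are routine. For $p=+\infty$ one reads $q=1$ throughout, and the argument is unchanged. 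Everything else is an application of the Moreau decomposition, which can be cited from \cite{Mor62} or standard convex analysis references.
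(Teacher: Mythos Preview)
Your proposal is correct and follows essentially the same route as the paper: both use the Moreau decomposition together with the identification of $(\JG)^*$ as the indicator of $\kk$. The only cosmetic difference is that the paper computes the Fenchel conjugate directly (turning $-\min$ into a $\sup$ so that the two suprema commute trivially, bypassing your worry about a sup/inf interchange), whereas you go through the dual norm first; the content is the same.
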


The proof exploits the particular definition of the penalty and relies on the Moreau decomposition
\beeq{eq:moreau}{\prox_{\lambda\Omega}(\w)=\w-\lambda\prox_{\frac{\Omega^*}{\lambda\,}}\left(\frac{\w}{\lambda}\right).}
Formula \eqref{Kdef} allows to compute the proximity operator of $\Omega$ starting from the proximity operator of the Fenchel conjugate. 
In our case, being $\JG$ one homogeneous, we obtain the identity minus the projection onto a closed and convex set.
The particular geometry  of $\kk$, which is the intersection of   $\ngr$ convex generalized cylinders ``centered'' on a coordinate subspace,  derives from definition of  $\JG$ and the explicit computation of its Fenchel conjugate. 
Observe that by definition $\JG$ is the infimal convolution of $\ngr$ functions, and precisely  the $\ngr$ norms on $\rone^{G_r}$ composed with the projections. By standard properties of the Fenchel conjugate, it follows that $(\JG)^*=\sum \iota_q$, where $\iota_q$ is the dual function of $\nor{\cdot}_p$, i.e.  the indicator function of the $\ell_q$ unitary ball in $\rone^{G_r}$. We give here a self-contained proof which does not use the notion of infimal convolution. A different proof for the case $p=2$ is given in \cite{obozinski:inria-00628498}. 
\begin{proof}
We start by computing explicitly the Fenchel conjugate of $\JG$.  By definition,
\begin{align*}
(\JG)^*(u) &=\sup_{\w\in\rone^\dd}\left[\,\,\langle\w,u\rangle- \min_{
 \substack{v\in\prod \rone^{G_r} \\P^*v= \w}}
 \sum_{r=1}^\ngr \nor{v_r}_p\right]
=\sup_{\w\in\rone^\dd}\left[\sup_{
 \substack{v\in\prod \rone^{G_r} \\P^*v= \w}}
\langle \w,u\rangle-  \sum_{r=1}^\ngr \nor{v_r}_p\right]\\
&=\sup_{v\in\prod \rone^{G_r}}
\left[\langle \sum_{r=1}^\ngr P^*_rv_r,u\rangle-  \sum_{r=1}^\ngr \nor{v_r}_p\right]=\sum_{r=1}^\ngr \sup_{v_r\in\rone^{G_r}}
 \left[\langle P_r^*v_r,u\rangle- \nor{v_r}_p\right]\\
 &\sum_{r=1}^\ngr \sup_{v_r\in\rone^{G_r}}
 \left[\langle v_r,P_r u\rangle- \nor{v_r}_p\right]=\sum_{r=1}^\ngr \iota_q(P_ru),
\end{align*}
where  $\iota_q$ is the Fenchel conjugate of $\nor{\cdot}_p$, i.e.  the indicator function of the $\ell_q$ unitary ball in $\rone^{G_r}$. We can rewrite the sum of indicator functions as $\sum_{r=1}^\ngr \iota_q(P_r u)=\iota_{\kk}(u).$
It is well-known that
\[
\prox_{\lambda \iota_{\kk}}(\w)=\pi_{\kk}(\w).
\]
Using the Moreau decomposition \eqref{eq:moreau} and basic properties of the projection we obtain
\beeq{eq:prdef}{\prox_{\lambda\Omega }(\w)=\w-\lambda\pi_{\kk}(\w/\lambda)=\w-\pi_{\lambda{\kk}}(\w).}
\end{proof}

The following lemma shows that, when evaluating the projection $\pi_{\kk}( x)$, we can restrict ourselves to a subset of {\itshape active} groups, denoted by $\hat{\mathcal{G}}= {\mathcal{G}(\hat x)}$ and defined in Lemma \ref{lemma:proj}. 
This equivalence is crucial to speed up  Algorithm \ref{algo:GSO}, in fact the number of active groups at iteration $\itext$ will converge to the number of selected groups, which is typically small if one is interested in sparse solutions.  
\begin{lemma}\label{lemma:proj}
Given $\w\in\rone^\dd$, it holds 
\beeq{primal_reduced}{\pi_{\lambda\kk} (\w)=\pi_{\lambda K^{\hat{\mathcal{G}}}_p}(\w)\,,}
where
$\hat{\mathcal{G}}:=\left\{\gr \in\mathcal{G},~\nor{\w}_{\gr,q}>\lambda \right\}.$
\end{lemma}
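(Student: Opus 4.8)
The plan is to prove that the projection of $\w$ onto the \emph{larger} set $\lambda K^{\hat{\mathcal{G}}}_p$ already satisfies the constraints coming from the inactive groups, so that it in fact lies in $\lambda\kk$; since $\lambda\kk\subseteq\lambda K^{\hat{\mathcal{G}}}_p$ (the set $\kk$ being cut out by more inequalities), this immediately forces the two projections to coincide. Throughout, write $\bar\w:=\pi_{\lambda K^{\hat{\mathcal{G}}}_p}(\w)$; both $\lambda\kk$ and $\lambda K^{\hat{\mathcal{G}}}_p$ are nonempty, closed and convex, being finite intersections of generalized cylinders that all contain the origin, so every projection below is well defined and unique.

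The core of the argument is a componentwise contraction estimate: $|\bar\w_j|\le|\w_j|$ for every $j\in\{1,\dots,\dd\}$. To get it, I would introduce the truncation map $T:\rone^\dd\to\rone^\dd$ where $(Tu)_j$ is the metric projection of $u_j$ onto the interval $[-|\w_j|,|\w_j|]$ (i.e. $u_j$ clipped to that interval, coordinatewise). Two elementary observations then do the job. First, $T$ maps $\lambda K^{\hat{\mathcal{G}}}_p$ into itself: since $|(Tu)_j|\le|u_j|$ and each restricted norm $\nor{\cdot}_{\gr,q}$ is monotone in the absolute values of the coordinates, $\nor{Tu}_{\gr,q}\le\nor{u}_{\gr,q}\le\lambda$ for every active $\gr$. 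Second, $\nor{Tu-\w}\le\nor{u-\w}$ for all $u$, with equality only if $Tu=u$: indeed $\w_j$ lies in $[-|\w_j|,|\w_j|]$, and the metric projection onto a convex set containing $\w_j$ cannot increase the distance to $\w_j$ (and strictly decreases it when the point is actually moved). Applying both observations to $u=\bar\w$ and using uniqueness of the projection onto $\lambda K^{\hat{\mathcal{G}}}_p$ gives $T\bar\w=\bar\w$, which is precisely $|\bar\w_j|\le|\w_j|$ for all $j$.

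With the contraction in hand the proof closes in two lines. For a group $\gr\notin\hat{\mathcal{G}}$ we have $\nor{\w}_{\gr,q}\le\lambda$ by definition of $\hat{\mathcal G}$, hence $\nor{\bar\w}_{\gr,q}\le\nor{\w}_{\gr,q}\le\lambda$ by monotonicity of $\nor{\cdot}_{\gr,q}$; for $\gr\in\hat{\mathcal G}$ the bound $\nor{\bar\w}_{\gr,q}\le\lambda$ holds because $\bar\w\in\lambda K^{\hat{\mathcal{G}}}_p$. Thus $\bar\w$ satisfies all $\ngr$ constraints defining $\lambda\kk$, i.e. $\bar\w\in\lambda\kk$. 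Finally, since $\lambda\kk\subseteq\lambda K^{\hat{\mathcal{G}}}_p$, every $z\in\lambda\kk$ is feasible for the reduced problem, so $\nor{\w-\bar\w}\le\nor{\w-z}$; hence $\bar\w$ also minimizes the distance to $\w$ over $\lambda\kk$, i.e. $\bar\w=\pi_{\lambda\kk}(\w)$, which is \eqref{primal_reduced}.

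There is no deep obstacle here, but the delicate point is the contraction $|\bar\w_j|\le|\w_j|$: it is false for projections onto general convex sets and genuinely exploits the axis-aligned cylindrical geometry of $\kk$. The truncation-operator trick is the cleanest route I know, and as a bonus it works verbatim for every $q\in[1,+\infty]$ by monotonicity of the $\ell_q$-norm, so the cases $p=2$ and $p=+\infty$ (i.e. $q=1$) need no separate treatment.
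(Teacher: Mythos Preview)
Your proof is correct and follows essentially the same route as the paper's: both establish the coordinate-wise contraction $|\bar\w_j|\le|\w_j|$ from the axis-aligned cylindrical geometry, then use monotonicity of $\nor{\cdot}_{\gr,q}$ to show that the inactive constraints are automatically satisfied, and close via the inclusion $\lambda\kk\subseteq\lambda K^{\hat{\mathcal G}}_p$. The only cosmetic difference is that the paper proves the contraction by a one-coordinate contradiction (replace a single offending coordinate by $\w_{\hat\jmath}$), whereas you package the same idea as a global truncation operator $T$ and invoke uniqueness of the projection.
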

\begin{proof}
Given a group of indices $G$ and a number $p>1$, we denote by $C_{G,p}$  the convex set
\[
C_{G,p}=\{ \w\in\rone^\dd\,:\, \nor{\w}_{G,q}\leq 1\} .
\]
To prove the result we first show that for any subset $\mathcal{S}\subseteq\mathcal{G}$ the projection  onto the intersection $\lambda K_{p}^{\mathcal{S}}=\cap_{G\in \mathcal{S}}\lambda C_{G,p}$ is non-expansive coordinate-wise with respect to zero.
More precisely, for all $\w\in \mathbb{R}^d$, it holds that $|\pi_{\lambda K_{p}^{\mathcal{S}}} (\w)_i|\leq |\w_i|$ for all $i=1,\ldots,d$ and for all $\lambda>0$.
By contradiction, assume that there exists an index $\hat{\!j}$ such that $|\pi_{\lambda K_{p}^{\mathcal{S}}}( x)_{\hat{\!j}}|>| x_{\hat{\!j}}|$. Consider the vector $\tilde{\w}$ defined by setting
\[
\tilde{\w}_j=\begin{cases}  \pi_{\lambda K_{p}^{\mathcal{S}}}( x)_j & \text{if } j\neq \hat{\!j}\\ 
\w_{\hat{\!j}} & \text{otherwise.}   
\end{cases}
\]
First note that $\tilde{ x}\in  \lambda K^{\mathcal{S}}_p$, since $\nor{\tilde{ x}}_{G,q}\leq \nor{ \pi_{\lambda K_{p}^{\mathcal{S}}}( x)}_{G,q} \leq\lambda$ for all $G\in\mathcal{S}$. On the other hand
\[
\nor{ x-\tilde{ x}}^2=\sum_{\substack{j=1\\ j\neq \hat{\!j}}}^\dd ( x_j-\tilde{ x}_j)^2< \nor{ x -\pi_{\lambda K_{p}^{\mathcal{S}}}( x)}^2,
\]
which is a contradiction. 
To conclude, suppose that $\w\in \lambda K_{p}^{\mathcal{S}}$, with $\mathcal{S}\subseteq \mathcal{G}$. If we prove that 
\[
\proj{\lambda}(\w)=\pi_{\lambda K_p^{\mathcal{G}\setminus\mathcal{S}}}(\w),
\] 
we are done. For the sake of brevity denote $v=\pi_{\lambda K_p^{\mathcal{G}\setminus\mathcal{S}}}(\w)$.  Thanks to the non-expansive property it follows $|v_j|\leq |\w_j|$ for all $j=1,\ldots,\dd$ and therefore $v\in \lambda K_{p}^{\mathcal{S}}$. Since $v\in \lambda K_p^{\mathcal{G}\setminus\mathcal{S}}$ by hypotheses, we get that $v\in\lambda\kk$. Furthermore
by definition of projection
\[
\nor{v-\w}\leq \nor{w-\w}, \qquad \text{for every } w\in \lambda K_p^{\mathcal{G}\setminus\mathcal{S}}
\] 
and a fortiori  $\nor{v-\w}\leq \nor{w-\w}$ for every $w\in \lambda\kk$.
\end{proof}

\subsubsection{The projection on $\kk$ for general $p$}\label{sec:gen_proj}
The convex set $\kk$ is an intersection of convex sets, precisely
$$\kk = \bigcap_{\gr\in\mathcal{G}} C_{\gr,p}$$
where $ C_{\gr,p} = \{v\in\rone^\dd, \nor{v}_{\gr,q}\leq1\}$.

For general $p$ a possible minimization scheme for  computing the projection in \eqref{primal_reduced} can be obtained by applying the Cyclic Projections algorithm  \cite{boyle85} or one of its modified versions (see \cite{bauschke94ancora} and references therein).
In the particular case of  $p=2$, we describe the Lagrangian dual problem corresponding to the projection onto $K^{\mathcal{G}}_2$, and  we propose an alternative optimization scheme, the projected Newton method \cite{bertsekas82}, which better exploits the geometry of the set $K^{\mathcal{G}}_2$, and in practice proves to be  faster than the Cyclic Projections algorithm. 
Note that, in order to satisfy the hypothesis of Theorem \ref{conv_rate}, the tolerance for stopping the iteration must decrease with the outer iteration $\itext$. 

A simple way to compute the projection onto the intersection of convex sets is given by the 
{\itshape Cyclic Projections} algorithm \cite{boyle85}, which amounts to cyclically projecting onto each set.
Here  we recall in Algorithm \ref{algo:alt_proj} a modification of the Cyclic Projections algorithm proposed by \cite{bauschke94ancora}, for which strong convergence is guaranteed
(see Theorem 4.1 in \cite{bauschke94ancora}).
\begin{algorithm}[!ht]
\caption{Cyclic Projections}
\label{algo:alt_proj}
\begin{algorithmic}
\STATE \texttt{Given $ x\in\rone^\dd, \{C_{\gr_1,p},\dots,C_{\gr_\ngr,p}\}$}
\STATE \texttt{Let $\itint = 0$, $w^0 = \w$ and find $C_{\hat{G}_{1},p},\ldots,C_{\hat{G}_{\hat{B}},p}$}
\WHILE{\texttt{convergence not reached}}
\STATE $\itint = \itint+1$
\STATE \texttt{Let $\pi_\itint$ the projection onto $\tau C_{\hat{\gr}_{\itint{\,\rm mod}\hat{B}},p}$}
\STATE
$$
w^{\itint} = \frac{1}{\itint+1}\w + \frac{\itint}{\itint+1}\pi_\itint(w^{\itint-1}) 
$$
\ENDWHILE
\end{algorithmic}
\end{algorithm}

In the following we describe how to compute each projection $\pi_{C_{p,r}}$ for specific values of $p$.

\paragraph{$\mathbf{p=2}$.}
In this case $q=2$, and the projection is trivial
$$
[\pi_{\pone C_{\gr,2}}(w)]_j = 
\begin{cases}
\pone\frac{w_j}{\nor{w}_{\gr,2}}& \textrm{~if~} j\in\gr \textrm{~and~} \nor{w}_{\gr,2}>\pone\\
w_j &\textrm{~otherwise}
\end{cases}
$$

\paragraph{$\mathbf{p=\infty}$.}
In this case $q = 1$, and $C_{\gr,\infty}$ is an $\ell_1$ ball when restricting to the coordinates in $\gr$.
From Lemma 4.2 in \cite{fornasier2008}, we have that 
if $\nor{w}_{1}>\pone$, then the  projection of $w$ onto the $\ell_1$ ball of radius $\pone$, $\pone B_1$, is given by the {\itshape soft-thresholding operation}
$$
[\pi_{ \pone B_1}(w)]_j = (|w_j| - \mu)_+\mathrm{sign}(w_j)
$$
where $\mu$ (depending on $w$ and $\pone$) is chosen such that $\sum_j (|w_j| - \mu)_+=\pone$.

We recall a simple procedure provided  in  \cite{fornasier2008} for determining $\mu$.
In a first step, sort the absolute values of the components of $w$, resulting in the rearranged sequence, $w^*_j\geq w^*_{j+1}\geq 0$ for all $j$. 
Next, perform a search to find $k$ such that 
$$
\sum_{j=1}^{k-1} (w^*_j-w^*_k) \leq\pone\leq
\sum_{j=1}^k(w^*_j - w^*_{k+1}).
$$
Then set  $\mu = w^*_k + k^{-1}\left(\sum_{j=1}^{k-1}(w^*_j-w^*_k)-\pone\right)$

\paragraph{$\mathbf{p\neq 2,+\infty}$.} In these cases no known closed form for the projection  on the set $C_{G_r,p}$ exist, but it can be efficiently computed using Newton's method, as done in \cite{JacHamFad11}. 
\subsubsection{The projection on $\kk$ for $p=2$}
When $p=2$, the projection onto  $K_2^{\mathcal{G}}$ amounts to solving the constrained minimization problem \beeq{primal_2}{
\begin{array}{ll} 
\textrm{Minimize } &\nor{v-\w}^2\\
\textrm{subject to} &v\in \rone^d ,\   \nor{v}_{\gr,2}\leq \tau, \  ~\textrm{for} ~\gr\in\hat{\mathcal{G}},
\end{array}
}
which  Lagrangian dual problem can be written in a closed form.  Working on the dual is advantageous, since the number of groups  is typically much smaller than $\dd$, and furthermore Lemma \ref{lemma:proj} guarantees that one can restrict to the subset of groups 
\begin{equation}\label{eq:def_Gcap} \hat{\mathcal{G}}:=\{\gr \in\mathcal{G}:~\nor{\w}_{\gr,2}>\tau \}=:\{\hat{\gr}_1,\dots,\hat{\gr}_{\hat{\ngr}}\}\end{equation}
 which in general is a proper subset of $\mathcal{G}$.

In the following theorem we show how to compute the solution to problem \eqref{primal_2}, 
by solving the associated dual problem.

\begin{theorem}
\label{teo:K2}
Given $\w\in\rone^\dd$, $\mathcal{G}=\{\gr_r\}_{r=1}^\ngr$ with $\gr_r\subset  \{1,\dots,\dd\}$, $\hat{\mathcal{G}}$ as in \eqref{eq:def_Gcap} and $\tau>0$,
 the projection of $\w$ onto 
the convex set $\tau K_2^{\mathcal{G}}$ with $ K_2^{\mathcal{G}}= \{ v\in \rone^d :  \nor{v}_{\gr_r,2}\leq \tau ~\textrm{for}~r=1,\dots,\ngr\}$ is given by 
\beeq{eq:projection_2}{
\left[\pi_{\tau K_2^{\mathcal{G}}}(\w)\right]_j= \frac{\w_j}{1+\sum_{r=1}^{\hat{\ngr}} \lambda^*_r\mathbf{1}_{r,j}}\qquad \textrm{for } j= 1,\dots,\dd
}
where $\lambda^*$ is the solution of 
\beeq{dual_2}{
\argmax_{\lambda\in\rone^{\hat{\ngr}}_+}f(\lambda),\qquad \textrm{with }\quad
f(\lambda)= \sum_{j=1}^\dd\frac{-\w^2_j}{1+\sum_{r=1}^{\hat{\ngr}}\mathbf{1}_{r,j}\lambda_r}-\sum_{r=1}^{\hat{\ngr}}\lambda_r\tau^2,
}
and $\mathbf{1}_{r,j}$ equal to $1$ if $j$ belongs to group $\hat{\gr}_r$ and $0$ otherwise.
\end{theorem}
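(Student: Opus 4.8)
The plan is to combine the active-set reduction of Lemma~\ref{lemma:proj} with Lagrangian duality for the convex quadratically constrained program \eqref{primal_2}.

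First I would apply Lemma~\ref{lemma:proj} with $p=q=2$ and $\lambda=\tau$, which identifies $\hat{\mathcal{G}}$ exactly with the set \eqref{eq:def_Gcap} and reduces $\pi_{\tau K_2^{\mathcal{G}}}(\w)$ to $\pi_{\tau K_2^{\hat{\mathcal{G}}}}(\w)$, i.e.\ to the unique solution $v^*$ of \eqref{primal_2}; existence and uniqueness are clear since this is the metric projection onto a nonempty closed convex set. Then I would introduce multipliers $\lambda\in\rone^{\hat{\ngr}}_+$ and the Lagrangian
$$
L(v,\lambda)=\sum_{j=1}^\dd\Big[(v_j-\w_j)^2+\Big(\textstyle\sum_{r}\mathbf{1}_{r,j}\lambda_r\Big)v_j^2\Big]-\tau^2\sum_{r=1}^{\hat{\ngr}}\lambda_r.
$$
Since $\tau>0$, the point $v=0$ is strictly feasible, so Slater's condition holds and strong duality is in force: the optimal value of \eqref{primal_2} equals $\sup_{\lambda\ge 0}g(\lambda)$ with $g(\lambda)=\inf_v L(v,\lambda)$, and a dual maximizer $\lambda^*$ exists (indeed $g$ is continuous and the term $-\tau^2\sum_r\lambda_r$ forces $g(\lambda)\to-\infty$ as $\nor{\lambda}\to\infty$ on $\rone^{\hat{\ngr}}_+$).

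Next I would compute $g$ in closed form. For fixed $\lambda\ge 0$, $L(\cdot,\lambda)$ is separable and strictly convex (its Hessian is $2I+2\,\mathrm{diag}(\sum_r\mathbf{1}_{r,j}\lambda_r)\succ 0$), so its unique minimizer is obtained coordinate-wise by setting the gradient to zero, giving $v_j(\lambda)=\w_j/(1+\sum_r\mathbf{1}_{r,j}\lambda_r)$. Substituting this back and using $\tfrac{c}{1+c}=1-\tfrac1{1+c}$ yields $g(\lambda)=\nor{\w}^2+f(\lambda)$ with $f$ exactly the function in \eqref{dual_2}; the constant $\nor{\w}^2$ does not affect the argmax, so the dual problem is precisely \eqref{dual_2}, and $f$ is concave (each $\lambda\mapsto-\w_j^2/(1+\sum_r\mathbf{1}_{r,j}\lambda_r)$ is the concave map $t\mapsto-\w_j^2/(1+t)$ precomposed with an affine map), so \eqref{dual_2} is a bona fide concave maximization — which is what later allows the use of the projected Newton method. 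Finally, strong duality gives that $(v^*,\lambda^*)$ is a saddle point of $L$, hence $v^*$ minimizes $L(\cdot,\lambda^*)$; by the uniqueness just noted, $v^*=v(\lambda^*)$, which is exactly \eqref{eq:projection_2}.

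This argument is essentially routine convex duality, so there is no genuine obstacle; the only points deserving a careful word are invoking Slater's condition correctly to rule out a duality gap and the saddle-point step recovering $v^*$ from $\lambda^*$. One may also note that, although $\lambda^*$ itself need not be unique when some active groups coincide on the support of $\w$, the sums $\sum_{r}\mathbf{1}_{r,j}\lambda^*_r$ that enter \eqref{eq:projection_2} are uniquely determined (pinned down by $v^*_j=\w_j/(1+\sum_r\mathbf{1}_{r,j}\lambda^*_r)$ at coordinates with $\w_j\neq 0$, and irrelevant where $\w_j=0$), so the formula is unambiguous.
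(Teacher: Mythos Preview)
Your argument is correct and follows essentially the same route as the paper: form the Lagrangian for the reduced problem \eqref{primal_2}, minimize it in $v$ (the paper completes the square, you set the gradient to zero --- equivalent since $L(\cdot,\lambda)$ is strictly convex), identify the dual with \eqref{dual_2} up to an additive constant, and recover $v^*$ from $\lambda^*$ via strong duality. Your write-up is in fact more careful than the paper's, which simply asserts ``since strong duality holds'' without invoking Slater, and does not comment on existence of $\lambda^*$ or the well-definedness of \eqref{eq:projection_2} when $\lambda^*$ is non-unique.
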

\begin{proof}
The Lagrangian function for the minimization problem \eqref{primal_2} is defined as
\begin{eqnarray}
L(v,\lambda)&=& \nor{v-\w}^2 +\sum_{r=1}^{\hat{\ngr}} \lambda_r(\nor{v}_{\gr_r}^2-\tau^2)
\nonumber\\
\!&\!=\!\!&\!\! \sum_{j=1}^\dd \left[ (v_j-\w_j)^2 + \sum_{r=1}^{\hat{\ngr} }\lambda_r \mathbf{1}_{r,j}v_j^2\right] - \sum_{r=1}^{\hat{\ngr}}\lambda_r\tau^2\nonumber\\
\!&\!=\!\!&\!\! \sum_{j=1}^\dd(1\!+\!\sum_{r=1}^{\hat{\ngr}}\!\mathbf{1}_{r,j}\lambda_r)\!\left(\!v_j\!-\!\frac{\w_j}{1\!+\!\sum_{r=1}^{\hat{\ngr}}\!\mathbf{1}_{r,j}\lambda_r}\right)^2\!\!\!-
 \sum_{j=1}^\dd\frac{\w^2_j}{1\!+\!\sum_{r=1}^{\hat{\ngr}}\!\mathbf{1}_{r,j}\lambda_r}\!-\sum_{r=1}^{\hat{\ngr}}\lambda_r\tau^2
 \! +\! \nor{ x}^2
 \nonumber\\
 \end{eqnarray}
 \noindent where $\lambda\in\rone^{\hat{\ngr}}$.
 The  dual function is then
 $$
f(\lambda) = \inf_{v\in\rone^\dd}L(v,\lambda) = 
 L\left(\frac{ x_j}{1+\sum_{r=1}^{\hat{\ngr}}\mathbf{1}_{r,j}\lambda_r},\lambda\right)=
 - \sum_{j=1}^\dd\frac{\w^2_j}{1+\sum_{r=1}^{\hat{\ngr}}\mathbf{1}_{r,j}\lambda_r}-\sum_{r=1}^{\hat{\ngr}}\lambda_r\tau^2 + \nor{ x}^2.
 $$
Since strong duality holds, the minimum  of (4) is equal to maximum of the dual problem
which is  therefore
 \beeq{dual_2_app}{
\begin{array}{ll}
\textrm{Maximize } &f(\lambda)\\
\textrm{subject to} &\lambda_r\geq0 ~\textrm{for} ~r = 1,\dots,{\hat{\ngr}}.
\end{array}
}
Once the solution $\lambda^*$ to the dual problem \eqref{dual_2_app} is obtained, the solution to the primal problem \eqref{primal_2}, $v^*$, is given by
$$
v^*_j = \frac{\w_j}{1+\sum_{r=1}^{\hat{\ngr}} \lambda^*_r\mathbf{1}_{r,j}}\qquad \textrm{for } j= 1,\dots,\dd.
$$
\end{proof}
The dual problem  can be efficiently solved, for instance, via Bertsekas' projected Newton method described in \cite{bertsekas82}, 
and here reported as Algorithm \ref{projection} in the Appendix, where the first and second partial derivatives  of $f(\lambda)$ are given by
$$
\partial_r f(\lambda)
 = \sum_{j=1}^\dd\frac{\w_j^2 \mathbf{1}_{r,j}}{(1+\sum_{s=1}^{\hat{\ngr}}\mathbf{1}_{s,j}\lambda_s)^2}-\tau^2 ,
$$
and
\begin{align*}
\partial_r\partial_{s}f(\lambda)
&=- \sum_{j=1}^\dd\frac{2\w_j^2\mathbf{1}_{r,j}\mathbf{1}_{s,j}}{(1+\sum_{s=1}^{\hat{\ngr}}\mathbf{1}_{s,j}\lambda_s)^3}\\& = 
\begin{cases}
0 & \textrm{if}~	\hat{\gr}_r\cap\hat{\gr}_{s} = \emptyset\\
-2 \sum_{j\in \hat{G}_{r}\cap \hat{G}_s}\w_j^2(1+\sum_{s=1}^{\hat{\ngr}}\mathbf{1}_{s,j}\lambda_s)^{-3}&\textrm{otherwise}.
\end{cases}
\end{align*}

Bertsekas' iterative scheme  combines the basic simplicity of  the steepest descent iteration \cite{rosen60}
with the quadratic convergence of the projected Newton's method \cite{brayton79}. It  does not involve the solution of a 
quadratic program thereby avoiding the associated computational overhead. Its convergence properties have been studied in \cite{bertsekas82} and are briefly mentioned in next section. 
\subsection{Convergence analysis of \ref{GSO} Algorithm }\label{sec:convergence}

In this subsection we clarify the accuracy in the computation of the projection which is required to prove convergence of the  Algorithm \ref{algo:GSO}. As mentioned above,  we rely on recent theorems providing a convergence rate for proximal gradient methods with approximations. 

\begin{definition}\label{def:appr_poj} We say that $w$ is an approximation of  $\proj{\pone/\sigma}(\w)$  with tolerance $\epsilon$ if 
$\|w-\proj{\pone/\sigma}(\w)\|\leq \epsilon$.\end{definition}

\begin{theorem}\label{teo:convergenceISTA}
Given $\w^0\in\rone^\dd$, and $ \sigma = ||\Psi^T\Psi||/n$. 
Assume that  $\proj{\pone/\sigma}(\w^\itext)$  in Algorithm \ref{algo:GSO} is approximately computed at  step $\itext$  with tolerance $\epsilon_\itext={\epsilon_0}/{\itext^{\alpha}}$.
\begin{itemize}
\item If $\alpha>2$, there exists a constant $C_I:=C_I(p,\mathcal{G},\w_0,\sigma,\pone,\alpha)$ such that the iterative update \eqref{ISTA} satisfies 
\beeq{conv_rate}{\phiv^p\left(\frac1 \itext\sum_{i=1}^\itext\w^i\right)-\phiv^p( x^*)\leq \frac {C_I}{\itext}. }
\item If $\alpha>4$, there exists a constant $C_F:=C_F(p,\mathcal{G},\w_0,\sigma,\pone,\alpha)$ such that the iterative update \eqref{FISTA} satisfies 
\beeq{conv_rate_f}{\phiv^p\left(\w^\itext\right)-\phiv^p( x^*)\leq \frac {C_F}{\itext^2}. }

\end{itemize}
\end{theorem}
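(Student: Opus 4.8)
The plan is to reduce the statement to the existing convergence theory for inexact proximal gradient methods (FISTA with errors), as developed in \cite{Bach11, salzo2011prox}, by verifying that the approximate projection computed in Algorithm \ref{algo:GSO} induces an admissible error sequence in the proximity operator. The key observation is that, by Lemma \ref{lemma:prox}, $\prox_{\frac{\pone}{\sigma}\JG}(\hat{\acca}) = \hat{\acca} - \pi_{\frac{\pone}{\sigma}\kk}(\hat{\acca})$, and by Lemma \ref{lemma:proj} the projection onto $\frac{\pone}{\sigma}\kk$ equals the projection onto the reduced set $\frac{\pone}{\sigma}K_p^{\hat{\mathcal{G}}^\itext}$, where $\hat{\mathcal{G}}^\itext$ is exactly the active set identified in the algorithm. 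Hence if $w$ is an approximation of $\pi_{\frac{\pone}{\sigma}K_p^{\hat{\mathcal{G}}^\itext}}(\hat{\acca}^\itext)$ with tolerance $\epsilon_\itext$ in the sense of Definition \ref{def:appr_poj}, then $\hat{\acca}^\itext - w$ is an approximation of $\prox_{\frac{\pone}{\sigma}\JG}(\hat{\acca}^\itext)$ with the same tolerance $\epsilon_\itext$: the map $\id - (\cdot)$ is an isometry, so the error is transported verbatim from the projection to the proximal step.

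Next I would phrase the error in the vocabulary of the chosen reference. The cleanest route is the notion of an $\epsilon$-approximation of the proximity operator in the sense that $\w^\itext$ is computed so that $\tfrac12 \nor{\w^\itext - z}^2 + \tfrac{\pone}{\sigma}\JG(\w^\itext) \le \epsilon_\itext^2 + \min_x \big( \tfrac12\nor{x-z}^2 + \tfrac{\pone}{\sigma}\JG(x) \big)$, or alternatively the subdifferential-type criterion used in \cite{salzo2011prox}. Since the functional $\Phi_\lambda$ in \eqref{eq:def_prox} is $1$-strongly convex, a Euclidean error $\nor{\w^\itext - \prox(z)} \le \epsilon_\itext$ translates into a functional error of order $\epsilon_\itext^2$ on $\Phi_\lambda$; this is the standard quadratic-growth estimate around the unique minimizer and is the only nontrivial inequality needed to pass between error notions. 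Once in this form, Theorem 4.4 (resp. 4.1) of \cite{salzo2011prox} — or the corresponding result of \cite{Bach11} — directly yields that the rate $O(1/\itext)$ for ISTA (resp. $O(1/\itext^2)$ for FISTA) is preserved provided the error sequence is summable in the appropriate weighted sense: for ISTA one needs $\sum_\itext \epsilon_\itext < \infty$, hence $\alpha > 1$ suffices for the rate but to get the averaged-iterate bound \eqref{conv_rate} with a clean constant one uses $\sum_\itext \itext\,\epsilon_\itext < \infty$, i.e. $\alpha > 2$; for FISTA the relevant condition is $\sum_\itext \itext\,\epsilon_\itext < \infty$ on the proximal errors, which because of the quadratic relation corresponds to $\sum_\itext \itext\,\sqrt{\epsilon_\itext^{\mathrm{prox}}} = \sum_\itext \itext\,\epsilon_\itext$ under control — tracing the exponents with $\epsilon_\itext = \epsilon_0/\itext^\alpha$ gives the stated threshold $\alpha > 4$. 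The constants $C_I, C_F$ are then read off from the cited theorems and depend on $\nor{\w^0 - x^*}$, the Lipschitz constant $\sigma = \nor{\Psi^T\Psi}/n$ of $\nabla F$, the regularization parameter $\pone$, the series $\sum \itext^{1-\alpha}$ (whence the dependence on $\alpha$), and — through the active-set reduction — on $p$ and $\mathcal{G}$.

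The main obstacle is bookkeeping rather than conceptual: one must be careful about \emph{which} error criterion the black-box convergence theorem requires (Euclidean distance to $\prox$, functional suboptimality of $\Phi_\lambda$, or an approximate-subgradient condition), because the summability exponent needed on $\epsilon_\itext$ differs across these, and the factor-of-two discrepancy between $\alpha>2$ and $\alpha>4$ is precisely an artifact of the squared-versus-unsquared error entering the FISTA analysis. I would therefore state explicitly the lemma that a Euclidean $\epsilon$-approximation of $\pi_{\frac{\pone}{\sigma}K_p^{\hat{\mathcal{G}}^\itext}}$ is a $\Phi$-type $\epsilon^2$-approximation of $\prox_{\frac{\pone}{\sigma}\JG}$, invoke strong convexity of $\Phi_\lambda$ for the proof, and only then quote \cite{salzo2011prox, Bach11}. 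A secondary point to check is that the active-set step is exact — Lemma \ref{lemma:proj} guarantees $\pi_{\frac{\pone}{\sigma}\kk} = \pi_{\frac{\pone}{\sigma}K_p^{\hat{\mathcal{G}}^\itext}}$ with no approximation — so that the only source of error fed into the outer recursion is the inner projection solver (Cyclic Projections for general $p$, whose convergence in norm is what legitimizes achieving tolerance $\epsilon_\itext$ in finitely many inner steps, or the projected Newton method for $p=2$), and no additional error terms need to be carried.
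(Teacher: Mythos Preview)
Your overall architecture is the same as the paper's: identify the iterates of Algorithm~\ref{algo:GSO} with an inexact proximal gradient scheme via Lemmas~\ref{lemma:prox} and~\ref{lemma:proj}, then invoke the error-tolerant convergence results of \cite{Bach11}. The active-set step is indeed exact, so the only error is the one coming from the inner projection solver --- you have this right.

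The gap is in the conversion from a Euclidean error on the projection to the functional error on $\Phi_{\tau/\sigma}$ required by \cite{Bach11}. You claim that strong convexity of $\Phi_\lambda$ turns $\nor{w-\prox(z)}\leq\epsilon$ into $\Phi_\lambda(w)-\min\Phi_\lambda\leq O(\epsilon^2)$. This is the wrong direction: strong convexity (equivalently, quadratic growth) gives the \emph{lower} bound $\Phi_\lambda(w)-\min\Phi_\lambda\geq c\nor{w-\prox(z)}^2$, which lets you pass from functional error to Euclidean error, not the reverse. To bound $\Phi_\lambda(w)-\min\Phi_\lambda$ from above you need regularity of $\Phi_\lambda$ near the minimizer; since $\JG$ is a (nonsmooth) norm, $\Phi_\lambda$ is only Lipschitz, not $C^{1,1}$, and the best you get is a \emph{linear} bound $\Phi_\lambda(w)-\min\Phi_\lambda\leq C\nor{w-\prox(z)}$. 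The paper obtains exactly this by using that $\JG$, being a norm on $\rone^\dd$, is equivalent to $\nor{\cdot}$, hence $A$-Lipschitz, and then expanding $\Phi_{\tau/\sigma}(\w-w)$ directly.

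This matters for the exponents. With the correct linear relation the functional error is $\epsilon_\itext^{\mathrm{prox}}\sim \epsilon_0/\itext^\alpha$, and the sums in \cite{Bach11} involve $\sqrt{\epsilon_\itext^{\mathrm{prox}}}$: for ISTA one needs $\sum_\itext \itext^{-\alpha/2}<\infty$, i.e.\ $\alpha>2$, and for FISTA $\sum_\itext \itext\cdot\itext^{-\alpha/2}<\infty$, i.e.\ $\alpha>4$. Under your (incorrect) quadratic conversion the same computation would yield only $\alpha>1$ and $\alpha>2$; your paragraph then reaches $\alpha>2$ and $\alpha>4$ only by ad hoc strengthening (``to get a clean constant'') and a muddled chain of substitutions that does not actually produce $\alpha>4$. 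Replace the strong-convexity sentence with the Lipschitz argument for $\JG$ and the thresholds fall out cleanly.
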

\begin{proof}
It is enough to show that there exists a constant $C>0$ (independent of $w^\itint$ and $\w^\itext$) such that 
\begin{equation}\label{eq:err1}
\nor{w^\itint-\pi_{\tau K^{\mathcal{G}}_p}(\w^\itext)}\leq \frac{\epsilon_\itext}{C} \implies \Phi_{\frac\tau\sigma}(w^\itint) \leq \min\Phi_{\frac{\tau}{\sigma}}+ \epsilon_\itext
\end{equation}
where $\Phi_{\frac{\tau}{\sigma}}$ is defined as in \eqref{eq:def_prox}. Then the statement directly follows from Proposition 1 and Proposition 2 in \cite{Bach11}. 
In order to prove equation \eqref{eq:err1} first note that thanks to the assumption  $\cup _{r=1}^B G_r=\{1,\ldots,\dd\}$ made at the beginning, it easily follows from the definition that $\JG$ is a norm on $\mathbb{R}^\dd$, and therefore it is equivalent to the euclidean one. Thus,  there exists a constant $A$ (depending only on $p$ and $\mathcal{G}$) such that
\[
\JG( x)-\JG( x^\prime)\leq  A\nor{ x- x^\prime},\qquad \forall\,  x,  x^\prime \in \mathbb{R}^\dd.
\]   
Next, let $w$ and $\w$ be such that
\begin{equation}\label{eq:ass}
\nor{w-\proj{\pone/\sigma}(\w)}\leq \gamma,
\end{equation}
for some $\gamma>0$ (and suppose w.l.o.g. that $\gamma<1$). By Lemma \ref{lemma:prox} and by definition of $\prox_{\frac{\tau}{\sigma}\JG}(\w)$ and $\Phi_{\frac{\tau}{\sigma}}$ (see equation \eqref{eq:def_prox}) we have 
\[
\Phi_{\frac{\tau}{\sigma}}(\w-\proj{\pone/\sigma}(\w))=\min \Phi_{\frac\tau\sigma}.
\]
Thus, by equation \eqref{eq:ass}, and using the fact that $\JG$ is a norm
\begin{align*}
\Phi_{\frac{\tau}{\sigma}}( x-w)&=\frac{\sigma}{2\tau}\nor{w}^2+\JG(\w-w)\\
&\leq  \frac{\sigma}{2\tau}\nor{w-\proj{\pone/\sigma}(\w)}^2+ \frac{\sigma}{2\tau}\nor{\proj{\pone/\sigma}(\w)}^2+\frac{\sigma}{\tau}\langle w-\proj{\pone/\sigma}(\w), \proj{\pone/\sigma}(\w)\rangle \\ &\quad+\JG(\w-\proj{\pone/\sigma}(\w))+\JG(\proj{\pone/\sigma}(\w)-w)\\
&\leq \min \Phi_{\frac{\tau}{\sigma}}+\frac{\sigma}{2\pone}\gamma^2+\frac{\sigma}{\pone} \gamma \tilde{A}+A\gamma\\
&= \min \Phi_{\frac{\tau}{\sigma}}+ \left(\frac{\sigma}{2\tau}\gamma+\frac{\sigma}{\pone}\tilde{A}+A\right)\gamma\\
&\leq \min \Phi_{\frac{\tau}{\sigma}}+ C\gamma
\end{align*}
where $\tilde{A}$ is such that $\sup_{v\in K^{\mathcal{G}}_p} \nor{v}\leq \tilde{A}$ and $C=C(p,\mathcal{G},\sigma, \pone)$. Therefore, equation \eqref{eq:err1} holds with $C$ as defined above. 
\end{proof}
As it happens for the exact accelerations of the basic forward-backward splitting algorithm such as \cite{nesterov05,becker09, beck09}, convergence of the sequence $\w^\itext$ is no longer guaranteed unless strong convexity is assumed. \\

By Theorem 3.1 in \cite{bauschke94ancora}, Algorithm \ref{algo:alt_proj} is strongly convergent, and therefore, given arbitrary $\epsilon>0$ and $\w\in\mathbb{R}^\dd$, there exists an index $\itint_\itext:=\itint_\itext(\epsilon)$ such that  $w^{\itint_\itext}$ produced through Algorithm \ref{algo:alt_proj} enjoys the property
\[
\nor{w^{\itint}-\pi_{\tau K^{\mathcal{G}}_p}(\w^\itext)}\leq \epsilon,
\]
for every $\itint\geq \itint_\itext$.   

Algorithm \ref{algo:GSO} combined with Algorithm \ref{algo:alt_proj}  thus  converges to the minimum of \eqref{GSO} problem  with rate $1/\itext^2$, 
if the projection is approximately computed with tolerance $\epsilon_0/ \itext^\alpha$ with $\alpha>4$.
Similarly, one can use ISTA instead of FISTA as updating rule in Algorithm  \ref{algo:GSO}, obtaining 
the convergence rate  $1/m$, and setting $\alpha> 2$. It is clear that the choice of $\alpha$ defines the
stopping rule for the internal algorithm (see Subsection \ref{sec:cp_vs_dual}).\\

Every other algorithm producing admissible approximations can be used in place of Algorithm \ref{algo:alt_proj} in the computation of the 
projection. 
In the case $p=2$, we tested Bertsekas' projected  Newton method, reported in the Appendix as Algorithm \ref{projection}. Its convergence 
is not always guaranteed, since there are particular choices of $\w$ and $\mathcal{G}$ for which the partial Hessian of the dual function 
is not strictly positive defined, as would be required to ensure strong convergence (see Proposition 3 and Proposition 4 in \cite{bertsekas82}).  
However, ideas which are useful for circumventing the same problem for unconstrained Newton's method, such as preconditioning, could be 
easily adapted to this case, and convergence has  always been observed in our experiments (for more details see  the discussion in \cite{bertsekas82} and also the comments at the end of the next subsection).

\subsection{Computing the regularization path}

In Algorithm \ref{algo1} we report the complete scheme for computing the regularization path 
for the Group-wise Selection with Overlap  problem
 \eqref{GSO}, i.e. the set of
solutions corresponding to  different values of the  regularization parameter $\tau_1\!>\!\dots\!>\! \tau_{T}$. 
Note that we employ the {\em continuation} strategy proposed in \cite{hale08}.  
\begin{algorithm}[!ht]
\caption{Regularization path for GSO-$p$ }
\label{algo1}
\begin{algorithmic}
\STATE \textbf{Given:} $\tau_1>\tau_2>\dots>\tau_T, \mathcal{G},  \epsilon_0>0,\tolext>0$ \\
\STATE  \textbf{Let:}  $\sigma = ||\Psi^T\Psi||/n, ~\w^{(\tau_0)} = 0$
\FOR{$t = 1,\dots,T$}
\STATE \textbf{Initialize:} $ \w = \w^{(\tau_{t-1})}$
\WHILE{\texttt{convergence not reached}}
%
\STATE
$\bullet$ update $ x$ according to Algorithm \ref{algo:GSO}, with the projection computed 
via Cyclic Projections or by solving the dual problem\\
\ENDWHILE
\STATE $\w^{(\tau_t)} = \w$
\ENDFOR
\RETURN $\w^{(\tau_1)},\dots,\w^{(\tau_T)}$
\end{algorithmic}
\vspace{.3cm}
\end{algorithm}
%
When computing the proximity operator with  Bertsekas' projected Newton method,
a similar warm starting is applied to the inner iteration,  
since the $\itext$-th projection is initialized with the solution of the $(\itext\!-\!1)$-th projection.
Despite the local nature of Bertsekas' scheme, such an initialization empirically proved to guarantee convergence.   

\subsection{The replicates formulation}\label{sec:replication}

As discussed in Section \ref{sec:problem}, the most common method to solve \eqref{GSO} problem 
 is  to minimize the standard  group $\ell_1/\ell_p$ regularization (without overlap) 
in the expanded space of latent variables in \eqref{eq:repl} built by replicating variables belonging 
to more than one group, thus working in a $\tilde{\dd}$-dimensional space with $\tilde{\dd} = \sum_{r=1}^\ngr |\gr_r|$. 
Setting $\tilde{\Psi}=\Psi P^*$ and $\mathrm{R}^{\mathcal{G}}_p(v)=\sum_{r=1}^\ngr \nor{v_r}_p$, problem \eqref{eq:repl}
can be written as
\[
\min_{v\in \prod_{r=1}^\ngr \rone^{G_r}} \frac{1}{n} \nor{\tilde{\Psi} v-y}^2+2\pone  \mathrm{R}^{\mathcal{G}}_p(v).
\]

The main advantage of such a formulation relies on the possibility of using any state-of-the-art optimization procedure for $\ell_1$/$\ell_p$ regularization without overlap.  In terms of proximal methods, a possible solution is given by Algorithm \ref{algo1},
where the proximity operator can be now computed group-wise as
$$
\left((\textrm{prox}_{\lambda \mathrm{R}^{\mathcal{G}}_p}(v))_j\right)_{j\in{\gr}_r} = \left(I-\pi_{\lambda S_{{\gr}_r,p}}\right)\left((v_j)_{j\in{\gr}_r}\right)
$$
for all $r=1,\dots,\ngr$, where $S_{{\gr_r,p}}$ now denotes the $\ell_q$ unitary ball in $\mathbb{R}^{G_r}$.
Furthermore for $p=2$ and $p=+\infty$ each projection can be computed exactly as described in Subsection \ref{sec:gen_proj} , and the proximity operator of $\mathrm{R}^{\mathcal{G}}_p$ is thus exact.
The optimization algorithm for solving \eqref{GSO} via FISTA in the replicated space is reported in 
Algorithm \ref{algo:replication}.
\begin{algorithm}[!ht]
\caption{FISTA for Group-wise Selection without overlap}
\label{algo:replication}
\begin{algorithmic}
\STATE \textbf{Given:} $v^0\in\prod_{r=1}^B\rone^{G_r},  \tau>0,\sigma = ||\tilde{\Psi}^T\tilde{\Psi}||/\n$\\
\STATE \textbf{Initialize:} $\itext=0, w^1=v^0, t^1 = 1$\\
\WHILE{\texttt{convergence not reached}}
\FOR{$r=1,\dots,\ngr$}
\STATE
$$
v_r= \left(I-\pi_{\frac{\pone}{\sigma} S_{\gr_r,p}}\right)
\left(\left(w^{\itext}-\frac{1}{\n\sigma}
\tilde{\Psi}^T(\tilde{\Psi} w^\itext-y)\right)_{j\in {\gr}_r}
\right)
$$
\ENDFOR
$$
s_{\itext+1} = \frac{1}{2}\left(1+\sqrt{1+4s_\itext^2}\right)
$$
$$
w^{\itext+1} = \left(1+ \frac{s_{\itext}-1}{s_{\itext+1}}\right)v^\itext + \left(\frac{1-s_{\itext}}{s_{\itext+1}} \right)v_{\itext-1}
$$
\ENDWHILE
\RETURN $v^\itext$
\end{algorithmic}
\vspace{.3cm}
\end{algorithm}

The replicate formulation involves a much simpler proximity operator, but each iteration has higher computational cost, since now depends on $\tilde{\dd}$ rather than on $\dd$, 
and thus increases with the amount of overlap among variables subsets (see Section \ref{sec:expe} for numerical comparisons between the projection and replication approaches). 
\section{Numerical experiments}\label{sec:expe}


In this section we present  numerical experiments aimed at
studying the computational performance of the proposed family of optimization algorithms, 
and at comparing them with the state-of-the-art algorithms applied to the replicate formulation.

\subsection{Cyclic Projections vs dual formulation}\label{sec:cp_vs_dual}
We build $\ngr$ groups,$ \{\gr_r\}_{r=1}^B$, of size $\db$, with  $\gr_r\subseteq \{1,\dots,\dd\}$, 
by  randomly drawing sets of $\db$ indexes from $\{1,\dots,\dd\}$, and consider the cases $\db=10$, and $\db=100$.
We vary the number of groups $\ngr$, so that the dimension of the expanded space 
is $\ovlp$ times the input dimension, 
$\tilde{\dd} = \ovlp\dd$, with $\ovlp = 1.2, 2$ and $5$. 
Clearly this amounts to taking  $\ngr = \ovlp\cdot\dd/\db$. 
We then generate a vector $\w\in\rone^\dd$ by randomly drawing each of its entry 
from $\mathcal{N}(0,1)$.
We then  pick a value of $\pone$ such that, when computing $\textrm{prox}_{\pone\JG}(\w)$, 
all groups are active. 
Precisely 
 we take  $\pone = .8\cdot\min_{r=1,\dots,\ngr}\nor{\w}_{\gr_r,2}$. 
We first compute the {\itshape exact} solution 
$ x^\dagger = \textrm{prox}_{\Omega^{\mathcal{G}}_2}( x)$
\footnote{it is the solution computed via the projected Newton method for the dual problem with very tight tolerance}.
Then we compute the approximated solutions with the Cyclic Projections Algorithm \ref{algo:alt_proj} 
and by solving the dual via the projected Newton method. 
We will refer to the former as CP2 and to the latter as {\itshape dual}.
We stop the iteration when the distance from the exact solution is less than $\epsilon$ the norm of $ x^\dagger$. 
We consider different values for the tolerance $\epsilon$, precisely we take $\epsilon = 10^{-2},10^{-3},10^{-4}$.

Mean and standard deviation of the computing time over 20 repetitions are plotted 
in Figure \ref{fig:CP_vs_dual_p2_tempi_db10} and \ref{fig:CP_vs_dual_p2_tempi_db100}
for each value of $\alpha$ and $\epsilon$.
\begin{figure}
\includegraphics[width = \linewidth]{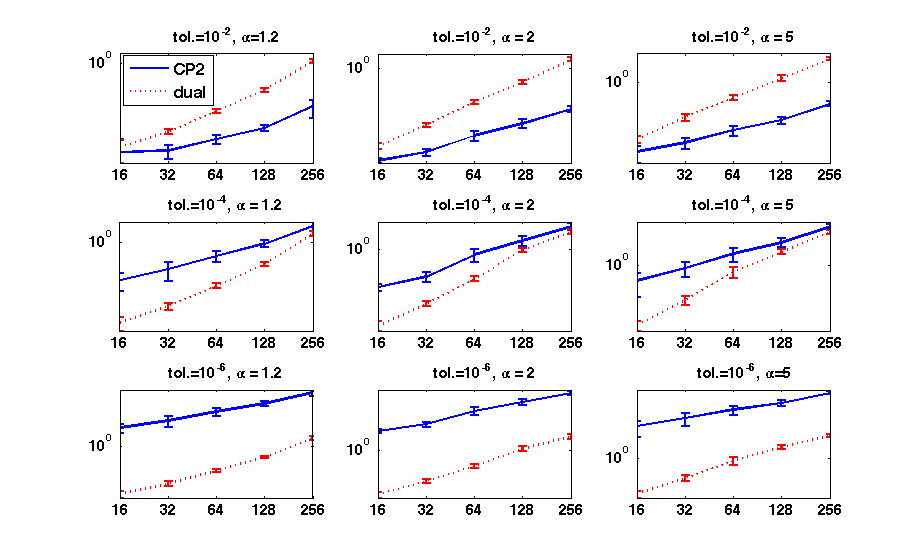}
\caption{Computing time (in seconds)  necessary for evaluating the prox  vs number of variables ($d$), 
for different values of the overlap degree $\alpha$ and the tolerance, for fixed group size $\db =10$.}
\label{fig:CP_vs_dual_p2_tempi_db10}
\end{figure}
\begin{figure}
\includegraphics[width = \linewidth]{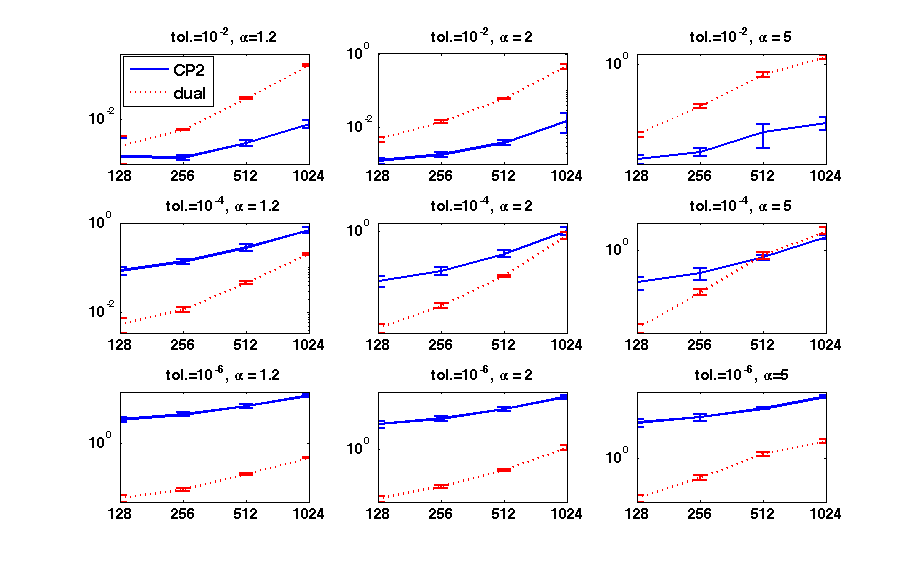}
\caption{Computing time (in seconds) necessary for evaluating the prox  vs number of variables ($d$), 
for different values of the overlap degree $\alpha$ and the tolerance, for fixed group size $\db =100$}
\label{fig:CP_vs_dual_p2_tempi_db100}
\end{figure}
The dual formulation is faster than the Cyclic Projections algorithm in most situations. 
It is convenient to use Cyclic Projections  when the number of active groups is high and the required tolerance very low.
When  computing the projection for Algorithm \ref{algo:GSO},
it is thus reasonable to use Cyclic Projections in the very first outer iterations, 
when the tolerance -- which depends on the outer iterations -- is low, and the solution could be not sparse, 
because still far from convergence. 
After few iterations, it is more convenient to resort to the dual formulation.
Even though, not optimal, in the following experiments, 
when denoting GSO-$2$ via projection we will consider 
always the projection computed with the dual formulation.

\subsection{Projection vs replication}
In this Subsection we 
compare the running time performance
of the proposed set of algorithms where the proximity operator is computed approximately, 
to state-of-the-art algorithms used to solve the equivalent formulation in the replicated space.
For such a comparison we restrict to $p=2$, 
since many benchmark algorithms are available 
in the case of groups that do not overlap.
In order to ensure a fair comparison, we first run some  preliminary experiments 
to identify the fastest codes for 
group $\ell_1$ regularization with no overlap.

\subsubsection{Comparison without overlap}

Recently there has been a very active research on this topic, see e.g. \cite{QinGol12,QinSchGol12,CheLinKim12}. For the comparison, we considered three algorithms which are representative  of the optimization techniques used to solve group lasso:
 interior-point methods, (group) coordinate descent and its variations, 
and proximal methods. 
As an instance of the first set of techniques we employed the publicly available Matlab code at \url{http://www.di.ens.fr/~fbach/grouplasso/index.htm}  described in \cite{bach08}. 
For coordinate descent methods, we employed the R-package \texttt{grlplasso}, 
which implements block coordinate gradient descent minimization for a set of possible loss functions. 
In the following we will refer to these two algorithms as ``IP'' and ``BCGD''.
Finally, as an instance of proximal methods, we use our Matlab implementation of FISTA for Group-wise Selection, 
namely Algorithm \ref{algo:replication} with FISTA instead of ISTA as updating rule. 
We will refer to it as ``PROX''.

We first observe that the solutions of the three algorithms coincide up to an error  which depends on each algorithm tolerance.
We thus need to tune the each tolerance  in order to guarantee that 
all iterative algorithms are stopped when the level of approximation to the 
true solution is the same.
Toward this end, we run Algorithm PROX with machine precision, $\tolext=10^{-16}$, 
in order to have a good approximation of the asymptotic solution. 
We observe that for many values of $\n$ and $\dd$, and over a large range of values of $\tau$, 
the approximation of  PROX when $\tolext\!=\!10^{-6}$
is of the same order of the approximation of  IP with $\texttt{optparam.tol}\! =\! 10^{-9}$, and of BCGD
with $\texttt{tol}\!=\!10^{-12}$. 
Note also that with these tolerances  the three solutions coincide also in terms of  selection, 
i.e. their supports  are identical for each value of $\tau$. 
Therefore the following results correspond to 
 $\texttt{optparam.tol} = 10^{-9}$ for IP,  $\texttt{tol}=10^{-12}$ for  BCGD,  and  $\tolext =10^{-6}$ for PROX.
For the other parameters of IP we  used the values used in the demos supplied with the code.\\
Concerning the data generation protocol, the input variables $x = (x_1,\dots,x_\dd)$ are uniformly drawn from $[-1,1]^{\dd}$.
The labels $y$ are computed using a noise-corrupted linear regression function, i.e.  $y = \w\cdot x + w$, where
$\w$ depends on the first $30$ variables, 
$\w _j = c$ if $j\!=\!1,\dots,30$, and $0$ otherwise,
 $w$ is an additive noise, $w\!\sim\! N(0,1)$, and $c$ is a rescaling factor that sets the signal to noise ratio to 5:1.
In this case the dictionary coincides with the variables, $\Psi_j(x) \!= \!x_j$ for $j\!=\!1,\dots,\dd$.
We then evaluate the entire regularization path for the three algorithms with $\ngr$ sequential groups of $10$ variables, 
($\gr_1\! \!=\!\! [1,\dots,10]$, $\gr_2 \!\!=\!\![11,\dots,20]$, and so on), for different values of $\n$ and  $\ngr$.
In order to make sure that  we are working on  the correct range of values for the  parameter
$\tau$, we first
evaluate the set of solutions of PROX corresponding to a large range of 500 values for $\tau$, with $\tolext \!=\! 10^{-4}$.
We then determine the smallest value of $\tau$ which corresponds to selecting less than $\n$ variables, $\tau_{min}$, 
and the smallest one returning the null solution, $\tau_{max}$. 
Finally we build the geometric series of $50$ values between $\tau_{min}$ and $\tau_{max}$, 
and use it to evaluate the regularization path on the three algorithms.
In order to obtain robust estimates of the running times, we repeat 20 times for each pair  $\n,\ngr$.\\
\begin{table}[t]
\caption{Running time (mean and standard deviation) in seconds for computing the entire regularization path of IP, BCGD, and PROX for different values of $\ngr$, and $n$.  }
\label{tab:GL-times}
\begin{center}
\begin{tabular}{l l}
$\n = 100$ & \begin{tabular}{c|p{2.3cm}|l}
 		& $\ngr = 10$ & $\ngr = 100$ \\
\hline
IP	&	$5.6\pm0.6$		&	$60\pm90$	\\
BCGD	&	$2.1\pm0.6$		&	$2.8\pm0.6$	\\
PROX	&	$0.21\pm0.04$	&	$2.9\pm0.4$	\\
\end{tabular} \\
&\\
 $\n = 500$&\begin{tabular}{c|p{2.3cm}|l}
 		& $\ngr = 10$ & $\ngr = 100$ \\
\hline
IP	&	$2.30\pm0.27$		&	$370\pm30$	\\
BCGD	&	$2.15\pm0.16$		&	$4.7\pm0.5$	\\
PROX	&	$0.1514\pm0.0025$	&	$2.54\pm0.16$	\\

\end{tabular}\\
&\\
$\n = 1000$&\begin{tabular}{c|p{2.3cm}|l}
 		& $\ngr = 10$ & $\ngr = 100$ \\
\hline
IP	&	$1.92\pm0.25$		&	$328\pm22$	\\
BCGD	&	$2.06\pm0.26$		&	$18\pm3$\\
PROX	&	$0.182\pm0.006$	&	$4.7\pm0.5$	\\
\end{tabular}
\end{tabular}
\end{center}
\end{table}
In Table \ref{tab:GL-times} we report the computational times required to evaluate the entire regularization 
path for the three algorithms.
Algorithms BCGD and PROX are always faster than IP
which, due to memory reasons, 
cannot be applied to problems where the number of variables are more than $5000$, since it requires 
to store the $\dd\times\dd$ matrix $\Psi\times\Psi$. It must be said that the code for GP-IL  was made available
 mainly in order to allow reproducibility of the results presented in \cite{bach08}, and is not optimized in terms of time and memory occupation.
However it is well known that standard second-order methods are typically  precluded on 
large data sets, since they need to solve large systems of linear equations to compute the 
Newton steps. PROX is the fastest for $\ngr = 10,100$ and has a similar behavior to BCGD.
The candidates as benchmark algorithms  for  comparison with
FISTA via projection are therefore BCGD and PROX. 
Since we are more familiar with the PROX algorithm, we therefore compare  FISTA via projection 
with the PROX algorithm, i.e.  FISTA via replication only.

\subsubsection{Comparison with overlap}
Here we compare two different implementations of the GSO-2 solution: 
FISTA via approximated projection computed by solving the dual problem with projected Newton method,
and FISTA via replication. We will refer to the former as FISTA-proj, and to the latter as FISTA-repl.\\

The data generation protocol is equal to the one described in the previous experiments, 
but $\w$ depends on the first $12/5\db$ variables (which correspond to the first three groups)
$$
\w = (\underbrace{c,~\dots,~c}_{\db\cdot12/5\textrm{ times}},\underbrace{0,~0,~\dots,~0}_{\dd-\db\cdot12/5\textrm{ times}}).
$$
We then define $\ngr$ groups of size $\db$, so that $\tilde{\dd} =\ngr\cdot\db>\dd$.
The first three groups correspond to the subset of relevant variables, and are defined as
 $\gr_1 = [1,\dots,\db]$, $\gr_2 = [4/5\db +1,\dots,9/5\db]$,
and $\gr_3 = [1,\dots, \db/5, 8/5\db +1,\dots,12/5\db]$, so that 
they have a  $20\%$ pair-wise overlap.
The remaining $\ngr-3$ groups are built by  randomly drawing sets of $\db$ indexes from $\{1,\dd\}$.
In the following we will let $\n = 10 |\gr_1\cup\gr_2\cup\gr_3|$, i.e. $\n$ is ten times the number of relevant variables, 
and vary $\dd, \db$.
We also vary the number of groups $\ngr$, 
so that the dimension of the space of latent variables is $\ovlp$ times the input dimension, 
$\tilde{\dd} = \ovlp\dd$, with $\ovlp = 1.2, 2,5$. Clearly this amounts to taking 
 $\ngr = \ovlp\cdot\dd/\db$. 
The parameter $\ovlp$ can be thought of as the average number of groups a single variable belongs to. 
We identify the correct range of values for $\tau$ as in the previous experiments, using FISTA-proj with loose tolerance,
and then evaluate the running time and the number of iterations necessary to compute the entire regularization path for
FISTA-repl on the expanded space and FISTA-proj, both with $\tolext = 10^{-6}$. 
Finally we repeat 20 times for each combination of the three parameters $\dd,\db$, and $\ovlp$.

\begin{table}[!h]
\caption{Running time (mean $\pm$ standard deviation) in seconds  for $\db\!=\!10$ (top), and $\db\!=\!100$ (below). 
For each $\dd$ and $\ovlp$, the left and right side correspond to  FISTA-proj, and FISTA-repl, respectively.}
\label{tab:GLO-times}
\begin{center}
\begin{tabular}{l|cc|cc|cc}
	& \multicolumn{2}{c|}{$\ovlp = 1.2$}  &\multicolumn{2}{c|}{$\ovlp=2$}  &\multicolumn{2}{c}{$\ovlp=5$}\\
\hline
$\dd\!=\!\!1000$	&	$0.15\pm0.04$	&	$0.20\pm0.09$	&	$1.6\pm0.9$	&	$5.1\pm2.0$	&	$12.4\pm1.3$	&	$68\pm8$\\
$\dd\!=\!\!5000$	&	$1.1\pm0.4$	&	$1.0\pm0.6$	&	$1.55\pm0.29$	&	$2.4\pm0.7$	&	$103\pm12$	&	$790\pm57$\\
$\dd\!=\!\!10000$	&	$2.1\pm0.7$	&	$2.1\pm1.4$	&	$3.0\pm0.6$	&	$4.5\pm1.4$	&	$460\pm110$	&	$2900\pm400$\\
\end{tabular}
\end{center}
\begin{center}
\begin{tabular}{l|cc|cc|cc}
	& \multicolumn{2}{c|}{$\ovlp = 1.2$}  &\multicolumn{2}{c|}{$\ovlp=2$}  &\multicolumn{2}{c}{$\ovlp=5$}\\
\hline
$\dd\!=\!\!1000$	&	$11.7\pm0.4$	&	$24.1\pm2.5$	&	$11.6\pm0.4$	&	$42\pm4$	&	$13.5\pm0.7$	&	$1467\pm13$\\
$\dd\!=\!\!5000$	&	$31\pm13$	&	$38\pm15$	&	$90\pm5$	&	$335\pm21$	&	$85\pm3$	&	$1110\pm80$\\
$\dd\!=\!\!10000$	&	$16.6\pm2.1$	&	$13\pm3$	&	$90\pm30$	&	$270\pm120$	&	$296\pm16$	&	--\\
\end{tabular}
\end{center}
\end{table}

\begin{table}[!h]
\caption{Number of iterations (mean $\pm$ standard deviation)  for $\db=10$ (top) and $\db=100$ (below). 
For each $\dd$ and $\ovlp$, the left and right side correspond to FISTA-proj, and FISTA-repl, respectively.}
\label{tab:GLO-iter}
\begin{center}
\begin{tabular}{l|cc|cc|cc}
	& \multicolumn{2}{c|}{$\ovlp = 1.2$}  &\multicolumn{2}{c|}{$\ovlp=2$}  &\multicolumn{2}{c}{$\ovlp=5$}\\
\hline
$\dd\!=\!\!1000$	&	$100\pm30$	&	$80\pm30$	&	$1200\pm500$	&	$1900\pm800$	&	$2150\pm160$	&	$11000\pm1300$\\
$\dd\!=\!\!5000$	&	$100\pm40$	&	$70\pm30$	&	$148\pm25$	&	$139\pm24$	&	$6600\pm500$	&	$27000\pm2000$\\
$\dd\!=\!\!10000$&	$100\pm30$	&	$70\pm40$	&	$160\pm30$	&	$137\pm26$	&	$13300\pm1900$&	$49000\pm6000$\\
\end{tabular}
\end{center}

\begin{center}
\begin{tabular}{l|cc|cc|cc}
	& \multicolumn{2}{c|}{$\ovlp = 1.2$}  &\multicolumn{2}{c|}{$\ovlp=2$}  &\multicolumn{2}{c}{$\ovlp=5$}\\
\hline
$\dd\!=\!\!1000$	&	$913\pm12$	&	$2160\pm210$	&	$894\pm11$	&	$2700\pm300$	&	$895\pm10$	&	$4200\pm400$\\
$\dd\!=\!\!5000$	&	$600\pm400$	&	$600\pm300$	&	$1860\pm110$	&	$4590\pm290$	&	$1320\pm30$	&	$6800\pm500$\\
$\dd\!=\!\!10000$	&	$81\pm11$	&	$63\pm11$	&	$1000\pm500$	&	$1800\pm900$	&	$2100\pm60$	&	--\\
\end{tabular}
\end{center}

\end{table}

Running times and number of iterations are reported in Table \ref{tab:GLO-times} and \ref{tab:GLO-iter}, respectively.
 When the overlap, that is $\ovlp$, is low the computational times of FISTA-repl and FISTA-proj are comparable.
As  $\ovlp$ increases, there is a clear advantage in using FISTA-proj instead of FISTA-repl. 
The same behavior occurs for the number of iterations.
\subsection{$p=2$ vs $p=\infty$}
We generate the groups and the coefficient vector as in Subsection  \ref{sec:cp_vs_dual}, with $\db = 10$.
Differently from the Subsection \ref{sec:cp_vs_dual}, here we compare the  computational performance 
of the same algorithm applied to two different problems: Cyclic Projections for $p=2$ and Cyclic Projections for $p=\infty$,
that yield different solutions, since 
 $\textrm{prox}_{\pone\Omega^{\mathcal{G}}_2}( x)\neq\textrm{prox}_{\pone\Omega^{\mathcal{G}}_\infty}( x)$. 
 In order to guarantee a fair comparison we consider two different values of $\pone$, 
 $\pone_2$ and $\pone_\infty$, such that, 
 when computing $\textrm{prox}_{\pone_2\Omega^{\mathcal{G}}_2}( x)$ and 
  $\textrm{prox}_{\pone_\infty\Omega^{\mathcal{G}}_\infty}( x)$, all groups are active. 
Precisely  we take  $\pone_2 = .8\cdot\min_{r=1,\dots,\ngr}\nor{\w}_{\gr_r,2}$. 
and $\pone_\infty = .8\cdot\min_{r=1,\dots,\ngr}\nor{\w}_{\gr_r,\infty}$. 
We compute the approximated solutions with the Cyclic Projections Algorithm \ref{algo:alt_proj}
for $p=2$ and $p=\infty$. We will refer to the former as CP2 and to the latter as CPinf.
We stop the iteration when the relative decrease of the approximated solution is below $\epsilon$.
We consider different values for the tolerance $\epsilon$, precisely we take $\epsilon = 10^{-2}, 10^{-3}, 10^{-4}$.

For each value of $\alpha$ and $\epsilon$ we estimate the number of iterations, and the computing time for the two algorithms, and average over 20 repetitions. Mean and standard deviation of number of iterations and the computing time are plotted in Figure \ref{fig:p2_vs_pinf_iter} and \ref{fig:p2_vs_pinf_tempi}.
In all conditions CP2 is much faster than CPinf.

\begin{figure}
\includegraphics[width = \linewidth]{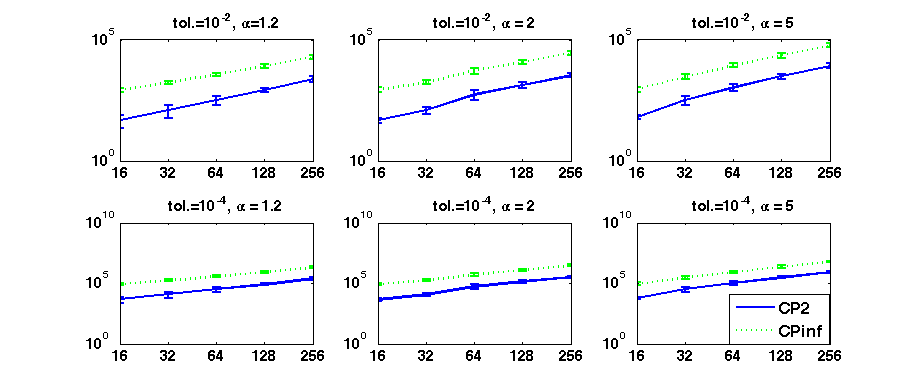}
\caption{Number of iteration necessary for evaluating the prox  vs number of variables ($d$), 
for different values of the overlap degree $\alpha$, and the tolerance.}
\label{fig:p2_vs_pinf_iter}
\end{figure}

\begin{figure}
\includegraphics[width = \linewidth]{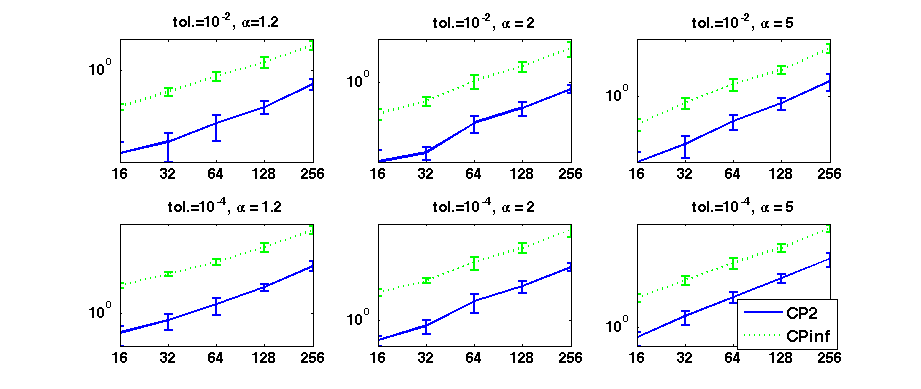}
\caption{Computing time (in seconds) necessary for evaluating the prox  vs number of variables ($d$), 
for different values of the overlap degree $\alpha$, and the tolerance}
\label{fig:p2_vs_pinf_tempi}
\end{figure}

\subsection{Real data Experiments: Microarray data}
In the previous subsection we have shown that, thanks to the computational efficiency of the
proposed projection algorithm, 
the GSO-$p$ regularization scheme  can be easily applied to large data sets with large group overlap.
Here we show that on real data, indeed, dealing with the entire data set without resorting to preprocessing
leads to improved prediction and selection performance.
We consider the  microarray experiment presented in \cite{jacob2009} 
where the breast cancer dataset compiled by \cite{vantveer2002} (8141 genes for 295 tumors) 
is analyzed with the group lasso with overlap penalty and the  637 gene groups corresponding to the MSigDB pathways  \cite{subramanian2005}.
In \cite{jacob2009} the accuracy of a logistic regression is estimated via 3-fold cross validation. On each split the 300 genes most correlated with the output are selected and the optimal $\tau$ is chosen via cross validation. 
$6,\,5$ and 78 pathways are selected with a $0.36\pm0.03$ cross validation error.
We applied FISTA-proj to the entire data set with two loops of k-fold cross validation (k$=\!3$ for testing). 
The obtained cross validation error is $0.33\pm0.05$ and $0.30\pm0.06$, with k$=\!3$ and k$=\!10$ for validation, respectively. 
In both cases  the number of selected groups is $2,3,$ and $4$, 
with 1 group in 3, and 3 pathways selected in 2 out of 3 splits.
The computing time for running the entire  framework for FISTA-proj 
(comprising data and pathways loading, recentering, selection via  FISTA-proj, regression via RLS on the selected genes,  and testing) 
is 850s (k$=\!\!3$) and 3387s (k$=\!\!10$).
Note that, while the improved cross validation error might be due to the second optimization step (RLS),
the improved stability is probably due to the absence of the preprocessing step, which can be highly unstable, 
thus compromising the overall stability of the solution.

\section{Discussion}
We have presented an efficient optimization procedure for computing the solution of a set of regularization
schemes that perform group-wise selection with overlapping groups, whose convergence is guaranteed. 
Our procedure allows dealing with high dimensional problems with large group overlap.
We have empirically shown that it has a significant computational advantage with respect to state-of-the-art
algorithms for group-wise selection applied on the expanded space built by replicating variables belonging to more than one group.
We also mention that computational performance may improve if
our scheme is used as core for the optimization step of active set methods, such as 
\cite{roth08}.
Finally, the improved computational performance enables to use
group-wise selection  with overlap  for pathway analysis of high-throughput biomedical data, 
since it can be applied to the entire data set and using all the information present in online databases,  
without pre-processing for dimensionality reduction.

\section*{Acknowledgments} Lorenzo Rosasco is assistant professor at DIBRIS, Universit\`a di Genova, Italy and currently on leave of absence.  The authors wish to thank Saverio Salzo for carefully reading the paper. 

\bibliographystyle{siam.bst}
\bibliography{tutta_la_bib}

\begin{thebibliography}{10}

\bibitem{bach08}
F.~Bach.
\newblock Consistency of the group lasso and multiple kernel learning.
\newblock {\em Journal of Machine Learning Research}, 9:1179--1225, 2008.

\bibitem{bach12}
F.~Bach, R.~Jenatton, J.~Mairal, and G.~Obozinski.
\newblock Optimization with sparsity-inducing penalties.
\newblock {\em Foundations and Trends in Machine Learning}, 4(1):1--106, 2012.

\bibitem{bach04}
F.~R. Bach, G.~Lanckriet, and M.~I. Jordan.
\newblock Multiple kernel learning, conic duality, and the smo algorithm.
\newblock In {\em ICML}, volume~69 of {\em ACM International Conference
  Proceeding Series}, 2004.

\bibitem{bauschke94ancora}
H.~Bauschke.
\newblock The approximation of fixed points of compositions of nonexpansive
  mappings in hilbert space.
\newblock {\em Journal of Mathematical Analysis and its Applications},
  201(1):150--159, 1994.

\bibitem{beck09}
A.~Beck and M.~Teboulle.
\newblock A fast iterative shrinkage-thresholding algorithm for linear inverse
  problems.
\newblock {\em SIAM J. Imaging Sci.}, 2(1):183--202, 2009.

\bibitem{becker09}
S.~Becker, J.~Bobin, and E.~Candes.
\newblock {NESTA}: A fast and accurate first-order method for sparse recovery.
\newblock {\em SIAM J. on Imaging Sciences}, 4(1):1--39, 2009.

\bibitem{bertsekas82}
D.~Bertsekas.
\newblock Projected newton methods for optimization problems with simple
  constraints.
\newblock {\em SIAM Journal on Control and Optimization}, 20(2), 1982.

\bibitem{boyle85}
J.~Boyle and R.~Dykstra.
\newblock A method for finding projections onto the intersection of convex stes
  in hilbert spaces.
\newblock In R.~Dykstra, T.~Robertson, and F.~Wright, editors, {\em Advances in
  Order Restricted Statistical Inference}, volume~37 of {\em Lecture Notes in
  Statistics}, pages 28--48. Springer--Verlag, 1985.

\bibitem{brayton79}
R.~Brayton and J.~Cullum.
\newblock An algorithm for minimizing a differentiable function subject to.
\newblock {\em J. Opt. Th. Appl.}, 29:521--558, 1979.

\bibitem{ChaPoc11}
A.~Chambolle and T.~Pock.
\newblock A first-order primal-dual algorithm for convex problems with
  applications to imaging.
\newblock {\em J. Math. Imaging Vision}, 40(1):120--145, 2011.

\bibitem{chen1999}
S.~Chen, D.~Donoho, and M.~Saunders.
\newblock Atomic decomposition by basis pursuit.
\newblock {\em SIAM Journal on Scientific Computing}, 20(1):33--61, 1999.

\bibitem{CheLinKim12}
X.~Chen, Q.~Lin, S.~Kim, J.~Carbonell, and E.P. Xing.
\newblock Smoothing proximal gradient method for general structured sparse
  regression.
\newblock {\em Annals of Applied Statistics}, 2012.

\bibitem{combettes}
P.~L. Combettes and V.~R. Wajs.
\newblock Signal recovery by proximal forward-backward splitting.
\newblock {\em Multiscale Model. Simul.}, 4(4):1168--1200 (electronic), 2005.

\bibitem{GO2000}
The Gene~Ontology Consortium.
\newblock Gene ontology: tool for the unification biology.
\newblock {\em Nature Genetics}, 25:25 -- 29, 2000.

\bibitem{DenYinZha11}
W.~Deng, W.~Yin, and Y.~Zhang.
\newblock Group sparse optimization by alternating direction method, 2011.

\bibitem{duchi2009}
J.~Duchi and Y.~Singer.
\newblock Efficient online and batch learning using forward backward splitting.
\newblock {\em Journal of Machine Learning Research}, 10:2899--2934, December
  2009.

\bibitem{efron04}
B.~Efron, T.~Hastie, I.~Johnstone, and R.~Tibshirani.
\newblock Least angle regression.
\newblock {\em Annals of Statistics}, 32:407--499, 2004.

\bibitem{Linz10}
M.~Fornasier, editor.
\newblock {\em Theoretical Foundations and Numerical Methods for Sparse
  Recovery}, volume~9 of {\em Radon Series on Computational and Applied
  Mathematics}.
\newblock De Gruyter, 2010.

\bibitem{fornasier2008}
M.~Fornasier, I.~Daubechies, and I.~Loris.
\newblock Accelerated projected gradient methods for linear inverse problems
  with sparsity constraints.
\newblock {\em J. Fourier Anal. Appl.}, 2008.

\bibitem{hale08}
E.~T. Hale, W.~Yin, and Y.~Zhang.
\newblock Fixed-point continuation for l1-minimization: Methodology and
  convergence.
\newblock {\em SIOPT}, 19(3):1107--1130, 2008.

\bibitem{jacob2009}
L.~Jacob, G.~Obozinski, and J.-P. Vert.
\newblock Group lasso with overlap and graph lasso.
\newblock In {\em Proceedings of the 26th Intearnational Annual Conference on
  Machine Learning}, pages 433--440, 2009.

\bibitem{JacHamFad11}
L.~Jacques, D.~Hammond, and J.~Fadili.
\newblock Dequantizing compressed sensing: when oversampling and non-{G}aussian
  constraints combine.
\newblock {\em IEEE Trans. Inform. Theory}, 57(1):559--571, 2011.

\bibitem{jenatton2009}
R.~Jenatton, J.-Y .~Audibert, and F.~Bach.
\newblock Structured variable selection with sparsity-inducing norms.
\newblock Technical report, INRIA, 2009.

\bibitem{jenatton2010SPCA}
R.~Jenatton, G.~Obozinski, and F.~Bach.
\newblock Structured principal component analysis.
\newblock In {\em Proceedings of the 13 International Conference on Artificial
  Intelligence and Statistics}, May 2010.

\bibitem{LiuHe10}
J.~Liu and J.~He.
\newblock Fast overlapping group lasso, 2010.

\bibitem{mairal2010network}
J.~Mairal, R.~Jenatton, G.~Obozinski, and F.~Bach.
\newblock Network flow algorithms for structured sparsity.
\newblock {\em Advances in Neural Information Processing Systems}, 2010.

\bibitem{meier08}
L.~Meier, S.~van~de Geer, and P.~Buhlmann.
\newblock The group lasso for logistic regression.
\newblock {\em J. R. Statist. Soc}, B(70):53--71, 2008.

\bibitem{michelli05}
C.~A. Micchelli and M.~Pontil.
\newblock Learning the kernel function via regularization.
\newblock {\em J. Mach. Learn. Res.}, 6:1099--1125, 2005.

\bibitem{Mor62}
J.-J. Moreau.
\newblock Fonctions convexes duales et points proximaux dans un espace
  hilbertien.
\newblock {\em C. R. Acad. Sci. Paris}, 255:2897--2899, 1962.

\bibitem{rosasco2010ecml}
S.~Mosci, L.~Rosasco, M.~Santoro, A.~Verri, and S.~Villa.
\newblock Solving structured sparsity regularization with proximal methods.
\newblock In J.~Balc\'azar, F.~Bonchi, A.~Gionis, and M.~Sebag, editors, {\em
  Machine Learning and Knowledge Discovery in Databases}, volume 6322 of {\em
  Lecture Notes in Computer Science}, pages 418--433. Springer, 2010.

\bibitem{glopridu}
S.~Mosci, S.~Villa, A.~Verri, and L.~Rosasco.
\newblock A primal-dual algorithm for group sparse regularization with
  overlapping groups.
\newblock In J.~Lafferty, C.~K.~I. Williams, J.~Shawe-Taylor, R.S. Zemel, and
  A.~Culotta, editors, {\em Advances in Neural Information Processing Systems
  23}, pages 2604--2612. 2010.

\bibitem{nesterov83}
Y.~Nesterov.
\newblock A method for unconstrained convex minimization problem with the rate
  of convergence $o(1/k^2)$.
\newblock {\em Doklady AN SSSR}, 269(3):543--547, 1983.

\bibitem{nesterov05}
Y.~Nesterov.
\newblock Smooth minimization of non-smooth functions.
\newblock {\em Mathematical Programming Series A}, 103(1):127--152, 2005.

\bibitem{Nes07}
Y.~Nesterov.
\newblock Gradient methods for minimizing composite objective function.
\newblock CORE Discussion Paper 2007/76, Catholic University of Louvain,
  September 2007.

\bibitem{obozinski:inria-00628498}
G.~Obozinski, L.~Jacob, and J.-P. Vert.
\newblock {Group Lasso with Overlaps: the Latent Group Lasso approach}.
\newblock Research report, October 2011.

\bibitem{park07}
M.~Y. Park and T.~Hastie.
\newblock L1-regularization path algorithm for generalized linear models.
\newblock {\em J. R. Statist. Soc. B}, 69:659--677, 2007.

\bibitem{PeyFad11}
G.~Peyr{\'e} and J.~Fadili.
\newblock Group sparsity with overlapping partition functions.
\newblock In {\em Proc. EUSIPCO 2011}, pages 303--307, 2011.

\bibitem{poliak71}
E.~Poliak.
\newblock {\em Computational Methods in Optimization: A Unified Approach}.
\newblock Academic Press, New York., 1971.

\bibitem{QinGol12}
Z.~Qin and D.~Goldfarb.
\newblock Structured sparsity via alternating direction methods.
\newblock {\em JMLR}, 13:1435--1468, 2012.

\bibitem{QinSchGol12}
Z.~Qin, K.~Scheinberg, and D.~Goldfarb.
\newblock Efficient block-coordinate descent algorithms for the group lasso,
  2012.

\bibitem{Roc76}
R.~T. Rockafellar.
\newblock Monotone operators and the proximal point algorithm.
\newblock {\em SIAM J. Control Optimization}, 14(5):877--898, 1976.

\bibitem{rosasco09struspa}
L.~Rosasco, M.~Mosci, S.~Santoro, A.~Verri, and S.~Villa.
\newblock Iterative projection methods for structured sparsity regularization.
\newblock Technical Report MIT-CSAIL-TR-2009-050, MIT, 2009.

\bibitem{rosen60}
J.~Rosen.
\newblock The gradient projection method for nonlinear programming, part i:
  linear constraints.
\newblock {\em J. Soc. Ind. Appl. Math.}, 8:181--217, 1960.

\bibitem{roth08}
V.~Roth and B.~Fischer.
\newblock The group-lasso for generalized linear models: uniqueness of
  solutions and efficient.
\newblock In {\em Proceedings of 25th ICML}, 2008.

\bibitem{Bach11}
M.~Schmidt, N.~Le~Roux, and F.~Bach.
\newblock Convergence rates of inexact proximal-gradient methods for convex
  optimization.
\newblock In {\em in Advances in Neural Information Processing Systems (NIPS)},
  2011.

\bibitem{subramanian2005}
A.~Subramanian and et~al.
\newblock Gene set enrichment analysis: a knowledge-based approach for
  interpreting genome-wide expression profiles.
\newblock {\em PNAS}, 102(43), 2005.

\bibitem{Tibshirani96}
R.~Tibshirani.
\newblock Regression shrinkage and selection via the lasso.
\newblock {\em J. Royal. Statist. Soc B.}, 58 No. 1:267--288, 1996.

\bibitem{Tse11}
P.~Tseng.
\newblock Approximation accuracy, gradient methods, and error bound for
  structured convex optimization.
\newblock {\em Math. Program.}, 125(2, Ser. B):263--295, 2010.

\bibitem{vantveer2002}
L.~van 't Veer~et al.
\newblock Gene expression profiling predicts clinical outcome of breast cancer.
\newblock {\em Nature}, 415(6871), 2002.

\bibitem{salzo2011prox}
S.~Villa, S.~Salzo, L.~Baldassarre, and A.~Verri.
\newblock Accelerated and inexact forward-backward algorithms.
\newblock {\em Optimization Online}, E-Print 2011 08 3132, 2011.

\bibitem{yuan06}
M.~Yuan and Y.~Lin.
\newblock Model selection and estimation in regression with grouped variables.
\newblock {\em Journal of the Royal Statistical Society, Series B},
  68(1):49--67, 2006.

\bibitem{zhao08}
P.~Zhao, G.~Rocha, and B.~Yu.
\newblock The composite absolute penalties family for grouped and hierarchical
  variable selection.
\newblock {\em Annals of Statistics}, 37(6A):3468--3497, 2009.

\end{thebibliography}
\appendix
\section{Projected Newton Method}
In this appendix we report as Algorithm \ref{projection}  Bertsekas' projected Newton method described in \cite{bertsekas82}, with the modifcations needed to perform the maximization of a concave function instead of the minimization of a convex one. 

\begin{algorithm}[!ht]
\caption{Projection onto $K_2^{\mathcal{G}}$}
\label{projection}
\begin{algorithmic}
\STATE \textbf{Given:} $\w\in \rone^\dd,\lambda_{\textrm{init}}\in\rone^{\hat{\ngr}}, \eta\in(0,1),\delta\in(0,1/2),\epsilon>0$ \\
\STATE \textbf{Initialize:} $\itint=0,\lambda^0=\lambda^{\textrm{init}}$\\
\WHILE{($\partial_r f(\lambda^\itint)\neq 0$ \texttt{if} $\lambda_r >0$, \texttt{or} $\partial_r f(\lambda^\itint)>0$ \texttt{if} $\lambda_r =0$, \texttt{for some} $r=1,\dots,\hat{\ngr}$)}
\STATE $\itint := \itint+1$$$
\epsilon_\itint = \textrm{min}\{\epsilon,||\lambda^\itint-[\lambda^\itint+\nabla f(\lambda^\itint)]_+||\}
$$
$$
\mathcal{I}_+^\itint = \left\{r \,:\, 0\leq\lambda^\itint_r\leq\epsilon_\itint, ~\partial_r f(\lambda^\itint)<0\right\}
$$
\beeq{acca}{
H^\itint_{r,s} = \begin{cases}
0& \textrm{if}~r \neq s, \textrm{and}~r\in\mathcal{I}_+^\itint \textrm{or}~s\in\mathcal{I}_+^\itint\\
\partial_r\partial_{s} f(\lambda^\itint)& \textrm{otherwise}
\end{cases}
}

$$
\lambda(\alpha)= [\lambda^\itint -\alpha(H^\itint)^{-1}\nabla f(\lambda^\itint)]_+
$$
\STATE $m=0$
\WHILE{
$f(\lambda(\eta^m))-f(\lambda^\itint)<
\delta\left\{-\eta^m\sum_{r\notin\mathcal{I}_+^\itint}\sum_{s=1}^{\hat{\ngr}}\partial_r f(\lambda^\itint) [(H^\itint)^{-1}]_{r,s}\partial_s f(\lambda^\itint)+
\sum_{r\in\mathcal{I}_+^\itint}\partial_r f(\lambda^\itint)[\lambda_r(\eta^m)-\lambda^\itint_r]\right\}$}
\STATE $m:=m+1$
\ENDWHILE
$$
\lambda^{\itint+1} = \lambda(\eta^m)
$$
\ENDWHILE
\RETURN $\lambda^{\itint+1}$
\end{algorithmic}

\vspace{.3cm}
\end{algorithm}

The step size rule, i.e. the choice of $\alpha$, is a combination of 
the Armijo-like rule \cite{rosen60} and the Armijo rule usually employed in unconstrained 
minimization (see, e.g., \cite{poliak71}).

\end{document}